\setlist[description]{font=\normalfont}
\declaretheorem{theorem}
\declaretheorem[sibling=theorem]{proposition}
\declaretheorem[sibling=theorem]{lemma}
\declaretheorem[sibling=theorem]{claim}
\declaretheorem[style=definition,sibling=theorem]{definition}
\declaretheorem[style=remark,sibling=theorem]{remark}
\declaretheorem[sibling=theorem]{fact}
\declaretheorem{question}
\DeclareMathOperator{\cof}{cof}
\DeclareMathOperator{\dom}{dom}
\newcommand{\seq}[2]{\langle #1 : #2 \rangle}
\newcommand{\surj}{\twoheadrightarrow}
\newcommand{\Col}{\textup{Col}}
\newcommand{\Add}{\textup{Add}}
\newcommand{\ON}{\textup{ON}}
\newcommand{\AP}{\textup{\textsf{{AP}}}}
\newcommand{\ZFC}{\textup{\textsf{ZFC}}}
\newcommand{\DSS}{\textup{\textsf{DSS}}}
\newcommand{\rest}{\upharpoonright}
\newcommand{\nrest}{\!\rest\!}
\newcommand{\M}{\mathbb M}
\renewcommand{\P}{\mathbb P}
\newcommand{\A}{\mathbb A}
\newcommand{\B}{\mathbb B}
\newcommand{\T}{\mathbb T}
\newcommand{\I}{\mathbb I}
\title{On Disjoint Stationary Sequences}
\author{Maxwell Levine}
\address{Albert-Ludwigs-Universit\"at Freiburg,
Mathematisches Institut, Abteilung f\"ur math. Logik, Ernst--Zermelo--Stra\ss e~1, 
79104 Freiburg im Breisgau, Germany}
\email{maxwell.levine@mathematik.uni-freiburg.de}
\keywords{Forcing, large cardinals, equiconsistency}
\subjclass[2010]{03E35, 03E55}
\begin{document}

\begin{abstract} We answer a question of Krueger by obtaining disjoint stationary sequences on successive cardinals. The main idea is an alternative presentation of a mixed support iteration, using it even more explicitly as a variant of Mitchell forcing. We also use a Mahlo cardinal to obtain a model in which $\aleph_2 \notin I[\aleph_2]$ and there is no disjoint stationary sequence on $\aleph_2$, answering a question of Gilton.\end{abstract}

\maketitle


\section{Introduction and Background}

In order to develop the theory of infinite cardinals, set theorists study a variety of objects that can potentially exist on these cardinals. The objects of interest for this paper are called \emph{disjoint stationary sequences}. These were introduced by Krueger to answer a question of Abraham and Shelah about forcing clubs through stationary sets \cite{Abraham-Shelah1983}. Beginning in joint work with Friedman, Krueger wrote a series of papers in this area, connecting a wide range of concepts and answering seemingly unrelated questions of Foreman and Todor{\v c}evi{\' c} \cite{Friedman-Krueger2007, Krueger2007, Krueger2008b, Krueger2008, Krueger2008a, Krueger2009}. The purpose of this paper is to further develop this area.


Krueger's new arguments generally hinged on the behavior of two-step iterations of the form $\Add(\tau) \ast \P$. In order to extend the application of these arguments as widely as possible, Krueger developed the notion of mixed support forcing \cite{Krueger2008b, Krueger2009}, which had apparently been part of the folklore for some time. These forcings are to some extent an analog of the forcing that Mitchell used to obtain the tree property at double successors of regular cardinals. Their most notable feature is the appearance of quotients insofar as the forcings took the form $\M \simeq \bar{\M} \ast \Add(\tau) \ast \mathbb{E}$ where $\bar \M$ is a partial mixed support iteration. The appearance of $\Add(\tau)$ after the initial component, together with the preservation properties of the quotient $\mathbb{E}$, allowed Krueger's new arguments to go through various complicated constructions. Mixed support iterations have found several applications since \cite{Gilton-Krueger2020}, particularly in regard to guessing models \cite{Viale2012}.


The main idea in this paper is to use a version of Mitchell forcing to accomplish the task of a mixed support iteration. Specifically, we prove that this version of Mitchell forcing takes the form $\M \simeq \bar{\M} \ast \Add(\tau) \ast \mathbb{E}$.\footnote{The extent to which all variations of these forcings are equivalent or not is left as a loose end. Here we only deal with the case where the two-step iteration $\Add(\tau) \ast \P$ takes the form $\Add(\tau) \ast \dot{\Col}(\mu,\delta)$.} The trick used to obtain this structural property goes back to Mitchell's thesis and is also reminiscent of the one used by Cummings et al$.$ in ``The Eightfold Way'' to demonstrate that subtle variations in the definitions of Mitchell forcing---up to merely shifting a L{\'e}vy collapse by a single coordinate---can substantially alter the properties of the forcing extension. The benefit of the forcing used here is that it comes with a projection analysis of the sort that Abraham used for Mitchell forcing \cite{Abraham1983}. Both the forcing itself and its quotients are projections of products of the form $\A \times \T$ where $\A$ has a good chain condition and $\T$ has a good closure property. This allows us to obtain preservation properties conveniently, without having to delve into too many technical details. Abraham in fact used this projection analysis to extend Mitchell's result to successive cardinals. This is exactly what we do here for disjoint stationary sequences, answering the first component of a question of Krueger \cite[Question 12.8]{Krueger2009}:

\begin{theorem}\label{consecutivetheorem} Suppose $\lambda_1<\lambda_2$ are two Mahlo cardinals in $V$. Then there is a forcing extension in which there are disjoint stationary sequences on $\aleph_2$ and $\aleph_3$.\end{theorem}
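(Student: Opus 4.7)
The plan is to adapt Abraham's product argument for the tree property at successive cardinals to the Mitchell-style forcing described in the excerpt. Concretely, let $\M_1$ be the variant of the forcing defined at $\lambda_1$ with $\tau = \mu = \omega_1$, so its generic adds a Cohen part on $\omega_1$ and generically collapses each Mahlo $\delta < \lambda_1$ to $\omega_1$; let $\M_2$ be the variant at $\lambda_2$ with $\tau = \mu = \lambda_1$, so its generic adds a Cohen part on $\lambda_1$ and collapses each Mahlo $\delta \in (\lambda_1,\lambda_2)$ to $\lambda_1$. We will force with $\M^\ast \defeq \M_1 \times \M_2$ and argue that both disjoint stationary sequences appear simultaneously in the extension.

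The cardinal analysis is handled cleanly by the projection that the excerpt establishes for each $\M_i$: namely $\M_i$ is a projection of $\A_i \times \T_i$ with $\A_i$ satisfying the $\lambda_i$-cc and $\T_i$ being $\tau_i$-closed. The product $\M^\ast$ is therefore a projection of $(\A_1 \times \A_2) \times (\T_1 \times \T_2)$. The closure of $\T_1 \times \T_2$, which inherits $\omega_1$-closure from $\T_1$, preserves $\omega_1$; meanwhile $\A_1 \times \A_2$, together with the Mahloness of $\lambda_2$ and standard \GCH{} counting, has $\lambda_2$-cc, preserving $\lambda_2$. For $\lambda_1$ one forces first with $\M_2$: since $\M_2$ projects from a $\lambda_1$-closed forcing it preserves $\lambda_1$ and adds no bounded subsets of $\lambda_1$, and then in $V[\M_2]$ the single-cardinal projection analysis for $\M_1$ still delivers the $\lambda_1$-cc of its Cohen component, so $\M_1$ over $V[\M_2]$ preserves $\lambda_1$. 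The collapses then give $\lambda_1 = \aleph_2$ and $\lambda_2 = \aleph_3$ in $V[\M^\ast]$.

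For the two disjoint stationary sequences: the single-cardinal analysis that the paper carries out for $\M_1$ provides, for each Mahlo $\alpha < \lambda_1$, a stationary $S_\alpha \subseteq [\alpha]^\omega$ in $V[\M_1]$, with the resulting sequence pairwise disjoint. Because $\M_2$ adds no countable subsets of any $\alpha < \lambda_1$ and preserves stationarity in $[\alpha]^\omega$ (being a projection of a $\lambda_1$-closed forcing), this sequence survives into $V[\M^\ast]$ as a disjoint stationary sequence on $\aleph_2$. For $\aleph_3$, fix a Mahlo $\alpha \in (\lambda_1,\lambda_2)$ and factor $\M^\ast$ as $(\M_1 \times (\M_2)_\alpha) \ast \dot{\mathcal{Q}}$. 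By the structural decomposition of $\M_2$ highlighted in the excerpt's footnote, $\dot{\mathcal{Q}}$ begins with an $\Add(\lambda_1)$ factor followed by a $\Col(\lambda_1,\delta)$-collapse, so the same quotient-level argument that produces $S_\alpha$ in the single-cardinal case will produce a stationary $S_\alpha \subseteq [\alpha]^{\omega_1}$ in $V[\M^\ast]$, with different $\alpha$'s giving disjoint sets.

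The main obstacle is the interaction between $\M_1$ and the quotient analysis for the $\aleph_3$ sequence. Specifically, one must verify (i) that the quotient $\M^\ast / (\M_1 \times (\M_2)_\alpha)$ really retains the form $\Add(\lambda_1) \ast \dot{\mathcal{Q}}'$ that the single-cardinal argument requires, despite $\M_1$ being interposed, and (ii) that $\M_1$ does not destroy stationarity of the resulting $S_\alpha \subseteq [\alpha]^{\omega_1}$ when one passes from $V[\M_2]$ to $V[\M^\ast]$. Point (i) should follow from the product structure of $\M^\ast$, since the $\Add(\lambda_1)$-coordinates above stage $\alpha$ do not involve $\M_1$. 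Point (ii) should follow from a standard chain-condition-plus-closure argument: the $\lambda_1$-cc of $\M_1$'s Cohen component and the $\omega_1$-closure of its collapse component together let one preserve stationary subsets of $[\alpha]^{\omega_1}$ by the usual adaptation of Baumgartner's preservation argument to generalized clubs.
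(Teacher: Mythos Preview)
There is a genuine gap: your choice of parameters $\tau=\mu=\omega_1$ for $\M_1$ and $\tau=\mu=\lambda_1$ for $\M_2$ breaks the mechanism that produces the disjoint stationary sequence. The single-cardinal argument you invoke is \autoref{wehaveit}, and it is stated and proved only for $\tau=\omega$; the reason is that the stationary sets $\mathcal{S}_\delta$ are obtained via \autoref{kruegergitik}, whose hypothesis is that $W\setminus V$ contains a \emph{real}. In the quotient at an inaccessible $\delta$, the paper's forcing splits off a copy of $\Add(\omega)$, and the Cohen real it adds is precisely what triggers Krueger's refinement of Gitik's theorem. With your $\M_1$ the quotient instead splits off $\Add(\omega_1)$, and with your $\M_2$ it splits off $\Add(\lambda_1)$; neither adds a real, so the appeal to ``the same quotient-level argument'' fails at the point where the stationary set is actually produced. (Separately, the definition of $\M^+(\tau,\mu,\lambda)$ requires $\tau<\mu$, so $\tau=\mu$ is not even an instance of the construction.)

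This is also why the paper does not use a product in the style of Abraham. Both forcings must have $\tau=\omega$ to generate the $\DSS$ witnesses, and with $\tau=\omega$ the second factor $\M_2$ is not $\lambda_1$-closed, so a literal product would not give you the preservation you claim in your point (ii). The paper instead takes the two-step iteration $\M_1\ast\dot\M_2$ with $\M_1=\M^+(\omega,\aleph_1,\lambda_1)$ and $\dot\M_2$ a name for $\M^+(\omega,\lambda_1,\lambda_2)$: $\DSS(\lambda_2)$ then follows directly from \autoref{wehaveit} applied in $V[G_1]$, and the $\DSS(\lambda_1)$ obtained in $V[G_1]$ is preserved not by closure of $\M_2$ itself but by its projection analysis --- the termspace $\T(\M_2)$ is $\lambda_1$-closed (since $\mu=\lambda_1$) and the Cohen component $\Add(\omega,\lambda_2)$ is ccc, so \autoref{closed-pres} and \autoref{chain-pres} apply. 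The moral is that the role Abraham's closure plays for the tree property is here played by the closure of the termspace, allowing $\tau=\omega$ throughout.
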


We lay out the basic definition and concepts in the following subsections and then develop the proof in \autoref{sec-thm1and2}. We also achieve one of Krueger's separations for successive cardinals, which answers a component of another one of his questions \cite[Question 12.9]{Krueger2009}:

\begin{theorem}\label{superconsecutivetheorem} Suppose $\lambda_1<\lambda_2$ are two Mahlo cardinals in $V$. Then there is a forcing extension in which for $\mu \in \{\aleph_1,\aleph_2\}$, there are stationarily many $N \in [H(\mu^+)]^\mu$ that are internally stationary but not internally club.\end{theorem}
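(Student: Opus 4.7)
The plan is to combine the forcing construction from \autoref{consecutivetheorem} with the known equivalence between disjoint stationary sequences and the internal-stationary-versus-internal-club separation for elementary submodels. Specifically, Krueger established (for regular $\mu$) that there is a disjoint stationary sequence on $\mu^+$ if and only if the collection of $N \in [H(\mu^+)]^\mu$ which are internally stationary but not internally club is stationary. Applying this equivalence with $\mu^+ = \aleph_2$ and then $\mu^+ = \aleph_3$ to the forcing extension produced in \autoref{consecutivetheorem}, which carries disjoint stationary sequences on both successive cardinals, would immediately yield the desired separations at $\mu = \aleph_1$ and $\mu = \aleph_2$ simultaneously.

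In more detail, I would take the extension $V[G]$ from \autoref{consecutivetheorem} together with its two disjoint stationary sequences $\seq{S^i_\alpha}{\alpha \in T_i}$ on $\aleph_{i+1}$ for $i \in \{1,2\}$, where $T_i \s \aleph_{i+1} \cap \cof(\aleph_i)$. Given a club $\mathcal{C} \s [H(\aleph_{i+1})]^{\aleph_i}$ and the relevant parameters, a standard Skolem-hull/reflection argument produces $N \in \mathcal{C}$ with $\delta := \sup(N \cap \aleph_{i+1}) \in T_i$ and $S^i_\delta \s N$. Then $S^i_\delta$ witnesses that $N$ is internally stationary, while the pairwise disjointness of $\seq{S^i_\alpha}{\alpha \in T_i}$, combined with a reflection argument applied to the full stationary set $T_i$, prevents $N$ from being internally club: any putative internal club subset of $[N]^{<\aleph_i}$ would have to meet a second $S^i_{\delta'}$ pulled back below $\delta$, contradicting disjointness.

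The main obstacle is verifying Krueger's equivalence at $\mu = \aleph_2$, since it is most commonly stated at $\mu = \aleph_1$, where ``internally'' refers to $[N]^\omega$. At the higher level one must interpret ``internally stationary'' and ``internally club'' as referring to $[N]^{<\aleph_2} \cap N$ inside $[N]^{<\aleph_2}$, and correspondingly one needs a DSS on $\aleph_3$ whose $\alpha$-th entry is a stationary subset of $[\alpha]^{<\aleph_2}$ rather than $[\alpha]^\omega$. Krueger's equivalence should generalize to this setting with only notational modifications, and the projection analysis underlying \autoref{consecutivetheorem}---in particular the behaviour of the $\aleph_2$-level L{\'e}vy collapse $\Col(\aleph_2,\delta)$ that appears in the iteration---ought to guarantee that the DSS produced on $\aleph_3$ has precisely this form, since the generic object at stage $\delta$ is an $\aleph_2$-sized subset of $[\delta]^{<\aleph_2}$. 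Once these two points are settled, the theorem follows directly from \autoref{consecutivetheorem} with no additional forcing needed.
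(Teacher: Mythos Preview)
Your approach has two genuine gaps, both of which the paper explicitly flags.

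First, you misstate Krueger's equivalence. \autoref{dss-equivalence} says that (under $2^\mu=\mu^+$) $\DSS(\mu^+)$ is equivalent to the existence of stationarily many $N\in[H(\mu^+)]^\mu$ that are internally \emph{unbounded} but not internally club---not internally \emph{stationary} but not internally club. Internally stationary is a strictly stronger property than internally unbounded, and the paper notes that it is consistent for there to be stationarily many $N$ which are internally unbounded but not internally stationary. So even if the equivalence applied, it would not deliver the conclusion of \autoref{superconsecutivetheorem}. Your sketched direct argument does not repair this: having $S^i_\delta\subseteq N$ for a single stationary $S^i_\delta\subseteq P_{\aleph_i}(\delta)$ does not make $P_{\aleph_i}(N)\cap N$ stationary in $P_{\aleph_i}(N)$, which is what ``internally stationary'' requires.

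Second, the cardinal-arithmetic hypothesis of \autoref{dss-equivalence} fails in the model of \autoref{consecutivetheorem}. The second step $\M^+(\omega,\lambda_1,\lambda_2)$ adds $\lambda_2$ Cohen reals, so $2^\omega=\aleph_3$ in the extension; in particular $2^{\aleph_1}\ge\aleph_3>\aleph_2$, and the equivalence cannot be invoked at $\mu=\aleph_1$. The paper says exactly this immediately after the statement of \autoref{superconsecutivetheorem}.

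What the paper does instead is prove the separation directly for the Mitchell-style poset (\autoref{distinctionlemma}), using a rich-model argument together with the projection analysis and \autoref{kruegergitik} to show that the relevant $Z$ is internally stationary but not internally club. It then proves a preservation lemma (\autoref{distinction-pres}) showing that a product of a $\delta$-cc and a sufficiently closed forcing preserves ``stationarily many $N$ are internally stationary but not internally club,'' so that the $\aleph_1$-level separation obtained from the first step survives the second step.
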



%




The last main result is motivated by work of Gilton and Krueger, who answered a question from ``The Eightfold Way'' by obtaining stationary reflection for subsets of $\aleph_2 \cap \cof(\omega)$ together with failure of approachability at $\aleph_2$ (i.e$.$ $\aleph_2 \notin I[\aleph_2]$) using disjoint stationary sequences \cite{Gilton-Krueger2020}. This result used the fact that the existence of a disjoint stationary sequence implies failure of approachability. Gilton asked for the exact consistency strength of the failure of approachability at $\aleph_2$ together with the nonexistence of a disjoint stationary sequence on $\aleph_2$ \cite[Question 9.0.15]{Gilton-thesis}. (He pointed out that Cox found this separation using $\textsf{\textup{PFA}}$ \cite{Cox2021}.) It is known that the failure of approachability requires the consistency strength of a Mahlo cardinal since $\square_\tau$ holds if $\tau^+$ is not Mahlo in $L$ \cite{Jensen1972} and $\square_\tau$ implies the approachability property $\tau^+ \in I[\tau^+]$ \cite{Handbook-Eisworth}. In \autoref{sec-thm3} we show that a Mahlo cardinal is sufficient for the separation:

\begin{theorem}\label{secondarytheorem} Suppose that $\lambda$ is Mahlo in $V$. Then there is a forcing extension in which $\aleph_2 \notin I[\aleph_2]$ and there is no disjoint stationary sequence on $\aleph_2$.\end{theorem}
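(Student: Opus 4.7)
The plan is to build a Mitchell-style forcing $\M$ at the Mahlo cardinal $\lambda$, in the spirit of the constructions used for Theorems~\ref{consecutivetheorem} and~\ref{superconsecutivetheorem} but tuned so that $\lambda$ becomes $\aleph_2$, approachability at $\aleph_2$ fails, and yet no disjoint stationary sequence is introduced. Concretely, I would set up a mixed-support iteration of length $\lambda$ whose nontrivial stages at inaccessible $\alpha<\lambda$ add some Cohen noise at $\omega$ and then L{\'e}vy-collapse $\alpha$ to $\aleph_1$. Following the paper's main idea, $\M$ should again be a projection of a product $\A\times\T$ with $\A$ of $\aleph_2$-cc and $\T$ being $\sigma$-closed, so that standard Abraham-style bookkeeping yields that $\aleph_1$ is preserved, $\lambda$ becomes $\aleph_2$, and that for a club of inaccessible $\alpha<\lambda$ one has a natural factorization $\M\simeq \M_\alpha\ast\dot{\M}^\alpha$ with $\alpha=\aleph_2^{V[\M_\alpha]}$.

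For $\aleph_2\notin I[\aleph_2]$, the plan is a reflection argument. Given a name $\dot{\vec a}$ for a putative approachability sequence, the $\aleph_2$-cc of $\A$ forces $\dot{\vec a}\nrest\alpha$ to be captured in $V[\M_\alpha]$ for stationarily many inaccessible $\alpha$ of cofinality $\omega_1$; the Cohen noise added at or above stage $\alpha$ then diagonalizes against any candidate cofinal $\omega_1$-sequence in $\alpha$ approached by $\vec a\nrest\alpha$, yielding failure of approachability in the full extension.

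The main obstacle, and the step that forces the construction to differ qualitatively from standard Mitchell forcing (which is known to add a DSS along exactly the same reflection points that witness $\aleph_2\notin I[\aleph_2]$), is ruling out disjoint stationary sequences. I would suppose toward contradiction that $\langle\dot S_\alpha:\alpha\in\dot T\rangle$ names a DSS, and use the projection $\A\times\T\to\M$ together with the $\sigma$-closure of $\T$ to show that for a stationary set of $\alpha\in T\cap\cof(\omega_1)$, some stationary piece of $\dot S_\alpha$ must already be decided inside a common intermediate extension $V[\M_\beta]$. This produces a fixed countable set lying in stationarily many of the $\dot S_\alpha$, contradicting disjointness. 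The technical heart is isolating the preservation property of the quotient $\M/\M_\beta$ that forces this reflection while still allowing the Cohen noise from the previous paragraph to survive; this balance is analogous to, but tighter than, the preservation arguments used for the main theorems, and is the point where the Mahloness of $\lambda$ (as opposed to just inaccessibility) is exploited in order to collect enough reflection points of cofinality $\omega_1$.
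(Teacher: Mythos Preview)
Your proposal has a genuine gap: you are trying to obtain $\neg\DSS(\aleph_2)$ directly from a one-step Mitchell-style forcing, but the paper's own \autoref{wehaveit} shows that exactly this kind of forcing \emph{does} add a disjoint stationary sequence, along precisely the stationary set of $V$-inaccessibles that also witnesses $\aleph_2\notin I[\aleph_2]$. You acknowledge this tension, but the reflection argument you sketch for ruling out a DSS does not go through. Having a stationary piece of $\dot S_\alpha$ decided in some common intermediate model $V[\M_\beta]$ does not produce a single countable set lying in stationarily many $S_\alpha$; the canonical DSS added by Mitchell forcing has $\mathcal S_\delta$ consisting of sets $N\cap\delta$ with $N\cap\delta\notin V[\bar G_\delta]$, and these are perfectly disjoint even though each $\mathcal S_\delta$ is defined from (and decided relative to) the intermediate model at stage $\delta$. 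There is no ``tighter'' preservation property of the quotient that blocks this while still allowing the Cohen reals needed for $\neg\AP$; you would have to change the forcing in a way you have not specified.

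The paper's approach is structurally different: it is a \emph{two-step} iteration $\M\ast\dot\P$. First one forces with a Mitchell-type poset $\M=\M^G_\ell(\kappa,\lambda)$ guided by a $\diamondsuit_\lambda(\mathrm{Reg})$-guessing function $\ell$ (\autoref{mahlolaverfunction}), and then, in the extension, one forces with $\P=\P(\underline x)$ which shoots a club through the complement of Krueger's canonical stationary set $S(\underline x)$ from \autoref{canonical-ds}. This second step kills $\DSS(\mu^+)$ outright, since $\DSS(\mu^+)$ is equivalent to the stationarity of $S(\underline x)$. The real work is then: (i) showing $\P$ is $\lambda$-distributive over $V[\M]$, which is done by a master-condition lifting argument (\autoref{liftinglemma}) exploiting that at guessed stages $\bar\lambda$ the model $\bar{\mathcal M}[\bar G_0]$ is closed under ${<}\bar\lambda$-sequences, so $P_\mu(\bar\lambda)\setminus\langle x_\beta:\beta<\bar\lambda\rangle$ is empty and $\cup\bar H_0\cup\{\bar\lambda\}$ is a legitimate condition; and (ii) showing $\neg\AP(\mu^+)$ survives the second step, which uses the factorization of $\M$ at guessed stages together with the distributivity of $\P$ and the projection analysis of $\N_{\bar\lambda^\oplus}$. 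The guessing function is essential for both points; Mahloness alone is used only to get $\diamondsuit_\lambda(\mathrm{Reg})$ after passing to $L$ if necessary.
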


Disjoint stationary sequences are known to be interpretable in terms of canonical structure (see \autoref{dss-equivalence} below), and the main idea for \autoref{secondarytheorem} is a simple master condition argument that exploits this connection.


\subsection{Basic Definitions}

We assume familiarity with the basics of forcing and large cardinals. We use the following conventions: If $\P$ is a forcing poset, then $p \le q$ for $p,q \in \P$ means that $p$ is stronger than $q$. We say that $\P$ is $\kappa$-closed if for all $\le_\P$-decreasing sequences $\seq{p_\xi}{\xi<\tau}$ with $\tau<\kappa$, there is a lower bound $p$, i.e$.$ $p\le p_\xi$ for all $\xi<\tau$. (Not all authors use this formulation of $\kappa$-closedness.) We say that $\P$ has the $\kappa$-chain condition if all antichains $A \subseteq \P$ have cardinality strictly less than $\kappa$. All posets considered will be separative.

Now we give our main definitions:

\begin{definition} Given a regular cardinal $\mu$, a \emph{disjoint stationary sequence} on $\mu^+$ is a sequence $\seq{\mathcal{S}_\alpha}{\alpha \in S}$ such that:

\begin{itemize}

\item $S \subseteq \mu^+ \cap \cof(\mu)$ is stationary,
\item $\mathcal{S}_\alpha$ is a stationary subset of $P_\mu(\alpha)$ for all $\alpha \in S$,
\item $\mathcal{S}_\alpha \cap \mathcal{S}_\beta = \emptyset$ if $\alpha \ne \beta$.
\end{itemize}


We write $\DSS(\mu^+)$ to say that there is a disjoint stationary sequence on $\mu^+$.
\end{definition}


\begin{definition} Given an uncountable regular $\kappa$ and a set $N \in [H(\Theta)]^\kappa$,\footnote{See Jech for details on stationary sets \cite{Jech2003}.} we say: 

\begin{itemize}

\item $N$ is \emph{internally unbounded} if $\forall x \in P_\kappa(N), \exists M \in N, x \subseteq M$,
\item $N$ is \emph{internally stationary} if $P_\kappa(N) \cap N$ is stationary in $P_\kappa(N)$,
\item $N$ is \emph{internally club} if $P_\kappa(N) \cap N$ is club in $P_\kappa(N)$,
\item $N$ is \emph{internally approachable} if there is an increasing and continuous chain $\seq{M_\xi}{\xi<\kappa}$ such that $|M_\xi|<\kappa$ and $\seq{M_\eta}{\eta<\xi} \in M_{\xi+1}$  for all $\xi<\kappa$ such that $N=\bigcup_{\xi<\kappa} M_\xi$.

\end{itemize}
\end{definition}

Although disjoint stationary sequences may seem unrelated to the separation of variants of internal approachability, there are deep connections here, for example:

\begin{fact}[Krueger, \cite{Krueger2009}]\label{dss-equivalence} If $\mu$ is regular and $2^\mu=\mu^+$, then $\DSS(\mu^+)$ is equivalent to the existence of a stationary set $U \subseteq [H(\mu^+)]^\mu$ such that every $N \in U$ is internally unbounded but not internally club.\end{fact}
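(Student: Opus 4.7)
This is a standard equivalence due to Krueger, proved via a trace map between elementary submodels and subsets of $P_\mu(\alpha)$. The plan is to handle each direction using this correspondence. Throughout, I restrict to $N \in [H(\mu^+)]^\mu$ with $\mu \subseteq N$, $N \prec H(\mu^+)$, and $\alpha_N := N \cap \mu^+$ of cofinality $\mu$ (all club conditions). For such $N$ define the trace
\[
T_N := \{M \cap \alpha_N : M \in N,\ M \prec H(\mu^+),\ |M| < \mu\} \subseteq P_\mu(\alpha_N).
\]
The key preliminary step is a trace lemma: $N$ is internally unbounded iff $T_N$ is stationary in $P_\mu(\alpha_N)$, and $N$ is internally club iff $T_N$ contains a club there. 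This is a direct computation: the map $y \mapsto y \cap \alpha_N$ carries $P_\mu(N) \cap N$ onto (essentially) $T_N$, and stationary/club properties transfer both ways using the cofinality of $\alpha_N$ and elementarity.

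For the direction from IU-not-IC models to a DSS, the plan is immediate: given stationary $U$ of IU-not-IC models, pick one representative $N_\alpha \in U$ for each $\alpha$ in the image $S := \{\alpha_N : N \in U\}$ and set $\mathcal{S}_\alpha := T_{N_\alpha}$. The trace lemma makes each $\mathcal{S}_\alpha$ stationary in $P_\mu(\alpha)$ and non-club. Disjointness is automatic because $\mathcal{S}_\alpha \subseteq P_\mu(\alpha)$. Finally, $S$ is stationary in $\mu^+ \cap \cof(\mu)$ because $U$ is stationary in $[H(\mu^+)]^\mu$ and the map $N \mapsto \alpha_N$ is continuous on a club.

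For the direction from a DSS $\langle \mathcal{S}_\alpha : \alpha \in S\rangle$ to IU-not-IC models, I would use $|H(\mu^+)| = 2^\mu = \mu^+$ to fix an enumeration of $H(\mu^+)$ in order-type $\mu^+$. For each $\alpha \in S$, I would inductively build $N_\alpha$ as a continuous increasing union $\bigcup_{\xi < \mu} M_\xi$ of elementary submodels of $H(\mu^+)$ of size $<\mu$, arranging that $M_\xi \in M_{\xi+1}$ (which forces internal unboundedness), $N_\alpha \cap \mu^+ = \alpha$, and $T_{N_\alpha} \subseteq \mathcal{S}_\alpha$ up to nonstationary error (achievable by choosing each $M_{\xi+1}$ with $M_{\xi+1} \cap \alpha \in \mathcal{S}_\alpha$, using the stationarity of $\mathcal{S}_\alpha$ in $P_\mu(\alpha)$ to run a bookkeeping through all the requirements). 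The trace lemma combined with DSS disjointness then shows $N_\alpha$ is not internally club: if $T_{N_\alpha}$ contained a club $C$ of $P_\mu(\alpha)$, then via elementarity one can use the analogous construction at levels $\beta \ne \alpha$ in $S$ to produce overlapping elements, contradicting disjointness by a diagonal argument. The collection $U := \{N_\alpha : \alpha \in S\}$ is stationary in $[H(\mu^+)]^\mu$ again by continuity of $N \mapsto \alpha_N$.

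The main obstacle is the inductive construction in the forward direction, particularly arranging $T_{N_\alpha} \subseteq \mathcal{S}_\alpha$ simultaneously with IU and with $N_\alpha \cap \mu^+ = \alpha$ exactly; this is where $2^\mu = \mu^+$ enters, since it allows a length-$\mu$ bookkeeping to enumerate and satisfy all the elementarity requirements along the way. A secondary technical point is the precise formulation of the trace lemma — in particular the equivalence between internal clubness and $T_N$ containing a club requires care with the inverse direction $y \mapsto y$ interpreted inside $N$, and is where one uses $\cf(\alpha_N) = \mu$ most crucially.
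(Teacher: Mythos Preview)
The paper does not prove this statement; it is quoted as a known fact of Krueger with a citation and no argument is given. So your proposal has to stand on its own.

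There is a genuine gap in the direction from IU-not-IC to $\DSS$. Your claim that ``disjointness is automatic because $\mathcal{S}_\alpha \subseteq P_\mu(\alpha)$'' is simply false: for $\alpha < \beta$ one has $P_\mu(\alpha) \subseteq P_\mu(\beta)$, so there is no reason whatsoever for $T_{N_\alpha}$ and $T_{N_\beta}$ to be disjoint --- both will typically contain many common bounded subsets of $\alpha$. More fundamentally, your choice $\mathcal{S}_\alpha := T_{N_\alpha}$ is backwards. The hypothesis ``not internally club'' says that $P_\mu(N)\cap N$ fails to contain a club, i.e.\ that its \emph{complement} is stationary; under the trace this means $P_\mu(\alpha_N)\setminus T_N$ is stationary, and it is these complements --- made canonical via a fixed enumeration of $[\mu^+]^{<\mu}$ coming from $2^\mu=\mu^+$, exactly as in \autoref{canonical-ds} --- that one uses to manufacture a disjoint stationary sequence. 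As written, your forward direction never invokes ``not IC'' at all. Your trace lemma is also misstated: internal unboundedness is a cofinality condition, and cofinal subsets of $P_\mu(X)$ need not be stationary, so ``$N$ is IU iff $T_N$ is stationary'' is not correct without further argument.

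The reverse direction is too vague to be a proof. Arranging $T_{N_\alpha}\subseteq\mathcal{S}_\alpha$ does not rule out $N_\alpha$ being internally club (if $T_{N_\alpha}$ contained a club then $\mathcal{S}_\alpha$ would too, which is no contradiction), and the phrase ``produce overlapping elements, contradicting disjointness by a diagonal argument'' names no actual mechanism. What one wants is the opposite containment: build the $N_\alpha$ canonically from a fixed enumeration so that $\mathcal{S}_\alpha$ lies outside $T_{N_\alpha}$, whence stationarity of $\mathcal{S}_\alpha$ directly shows $T_{N_\alpha}$ contains no club.
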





\subsection{Projections and Preservation Lemmas}

Technically speaking, our main goal is to show that certain forcing quotients behave nicely. We will make an effort to demonstrate the preservation properties of these quotients directly. These quotients will be defined in terms of projections:

\begin{definition}\cite{Handbook-Abraham} If $\P_1$ and $\P_2$ are posets, a \emph{projection} is an onto map $\pi:\P_1 \to \P_2$ such that:

\begin{itemize}
\item $p \le q$ implies that $\pi(p) \le \pi(q)$,
\item $\pi(1_{\P_1})=1_{\P_2}$,
\item if $r \le \pi(p)$, then there is some $q \le p$ such that $\pi(q) = r$.
\end{itemize}

A projection is \emph{trivial} if $\pi(p) = \pi(q)$ implies that $p$ and $q$ are compatible.
\end{definition}

Trivial projections are basically ismorphisms:

\begin{fact} If $\pi:\P_1 \to \P_2$ is a trivial projection, then $\P_1 \simeq \P_2$, i.e$.$ $\P_1$ and $\P_2$ are forcing-equivalent.\end{fact}

For our purposes, we are interested in the preservation of stationary sets. The chain condition gives us preservation fairly straightforwardly. The following fact is implicit in parts of the literature, and a version of it can be found in this paper in the form of \autoref{long-closed-pres}.

\begin{fact}\label{chain-pres} If $\P$ has the $\mu$-chain condition and $S \subset P_\mu(X)$ is stationary, then $\P$ forces that $S$ is stationary in $P_\mu(X)$.\end{fact}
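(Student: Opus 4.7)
My plan is to argue by contradiction: suppose that $p \in \P$ and $\dot C$ is a $\P$-name forced by $p$ to be a club in $P_\mu^V(X)$ disjoint from $\check S$. The strategy is to use the $\mu$-chain condition to absorb a closure function for $\dot C$ back into $V$, producing a ground-model club that $S$ must meet by stationarity.

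First, working in $V[G]$ (below $p$), I would exploit the unboundedness and $<\mu$-closure of $\dot C$ to build a function $\dot F : [X]^{<\omega} \to X$ with the property that every $N \in P_\mu^V(X)$ closed under $\dot F$ lies in $\dot C$. This is the standard closure-function characterization of clubs: for each finite $s$ one selects an element of $\dot C$ containing $s$ and enumerates it into a suitable chain, so that $F$-closure witnesses any prospective $N$ as the union of a $\subseteq$-increasing sequence from $\dot C$.

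Next, for each $s \in [X]^{<\omega}$, I would choose in $V$ a maximal antichain $A_s \le p$ deciding $\dot F(s)$. The $\mu$-c.c.\ gives $|A_s|<\mu$, so the set
\[ T_s \defeq \{x \in X : \exists a \in A_s,\; a \fr \dot F(s) = \check x\} \]
lies in $V$ with $|T_s|<\mu$. Let $E \defeq \{N \in P_\mu(X) : T_s \s N \text{ for every } s \in [N]^{<\omega}\}$. Since $\mu$ is regular, iterating closure under the set-valued map $s \mapsto T_s$ preserves cardinality below $\mu$, so $E$ contains a club of $P_\mu(X)$ in $V$.

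By stationarity of $S$, pick $N \in S \cap E$. For any generic $G$ containing $p$ and any $s \in [N]^{<\omega}$ one has $\dot F^G(s) \in T_s \s N$, so $N$ is closed under $\dot F^G$ and hence $N \in \dot C^G$; this gives $p \fr \check N \in \dot C \cap \check S$, a contradiction. I expect the main obstacle to be the first step, namely designing $\dot F$ so that its $V$-closure-points genuinely land in $\dot C$: the closure clause of $\dot C$ only needs to be checked along $\subseteq$-chains whose union lies in $P_\mu^V(X)$, so $\dot F$ must be arranged to extract such a chain from any candidate $V$-closure-point, and to argue this cleanly one works with the functional formulation of stationarity rather than an abstract notion of ``club in $P_\mu^V(X)$'' inside $V[G]$.
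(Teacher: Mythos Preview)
Your proposal is correct and is exactly the standard argument the paper has in mind. The paper does not write out a proof of this fact, but it makes clear in the proof of \autoref{distinction-pres} that the intended argument is the one you give, where it says ``the proof of \autoref{chain-pres} shows that added clubs contain ground model clubs''---this is precisely your construction of the ground-model club $E$ from the set-valued map $s \mapsto T_s$ obtained by decoding $\dot F$ along $<\!\mu$-sized maximal antichains. Your remark that one should work with the functional formulation of stationarity (closure points of $F:[X]^{<\omega}\to X$) is the right way to make the first step clean and to avoid any ambiguity about what ``club in $P_\mu^V(X)$'' means in the extension.
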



However, we must place demands on our stationary sets in order for them to be preserved by closed forcings.

\begin{definition} Consider a regular uncountable cardinal $\mu$ and a stationary set $S \subset P_\mu(X)$. We say that $S$ is \emph{internally approachable of length} $\tau$ if for all $N \in S$ with $N \prec H(X)$, there is a continuous chain of elementary submodels $\seq{M_i}{i<\tau}$ such that: $N = \bigcup_{i<\tau}M_i$ and for all $i<\tau$, $\seq{M_i}{i<j} \in N$. In this case we write $S \subseteq \mathcal{IA}(\tau)$.\end{definition}

Here we are following Krueger's convention \cite{Krueger2009}, which withholds the requirement that $|M_i|<\mu$ for $i<\tau$.





\begin{fact}\label{closed-pres} If $S \subset P_\mu(X) \cap \mathcal{IA}(\tau)$ is an internally approachable stationary set, $\tau<\mu$, and $\P$ is $\mu$-closed, then $\P$ forces that $S$ is stationary in $P_\mu(X)^V)$.\footnote{See \cite{Foreman-Magidor1995} for discussion of related facts.}\end{fact}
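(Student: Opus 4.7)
The strategy is a master-condition argument keyed to the internal approachability witness of elements of $S$. Fix $p \in \P$ and a $\P$-name $\dot F$ for a function $[H(\Theta)^V]^{<\omega} \to H(\Theta)^V$; it suffices to find $p^* \le p$ and $N \in S$ with $p^* \Vdash N$ is closed under $\dot F$, since every club in $P_\mu(H(\Theta)^V)^{V[G]}$ contains the closure points of such an $F$. To produce $N$, I choose $\Theta' \gg \Theta$ and $N^* \prec H(\Theta')$ containing the parameters $p, \dot F, \P, \mu, \tau, S$ and such that $N := N^* \cap H(\Theta) \in S$; this is possible because the stationarity of $S$ lifts to $P_\mu(H(\Theta'))$. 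Let $\langle M_i : i < \tau \rangle$ witness $N \in \mathcal{IA}(\tau)$, and after a harmless shift assume the named parameters lie in $M_0$.

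I then fix a well-ordering $<^*$ of $H(\Theta)$ and recursively define a descending sequence $\langle p_i : i < \tau \rangle$ below $p$: $p_0 = p$; $p_{i+1}$ is the $<^*$-least $q \le p_i$ that decides $\dot F(a)$ for every $a \in [M_i]^{<\omega}$; and at limit $i$, $p_i$ is the $<^*$-least lower bound of $\langle p_j : j < i \rangle$. Each step succeeds in $V$ because $\P$ is $\mu$-closed, $|[M_i]^{<\omega}| = |M_i| < \mu$, and $i < \tau < \mu$. The crucial invariant to maintain is $p_i \in M_{i+1}$: at a successor step $p_i, \dot F, M_i$ all lie in $M_{i+2}$, and elementarity $M_{i+2} \prec H(\Theta)$ yields the witness inside $M_{i+2}$; at a limit step the initial segment $\langle p_j : j < i \rangle$ is definable inside $M_{i+1}$ from $\langle M_j : j < i \rangle \in M_{i+1}$ and $<^*$. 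Finally let $p^*$ be any lower bound of the whole sequence, again using $\mu$-closure.

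The verification is then immediate: for $a \in [N]^{<\omega}$, continuity of the chain places $a \in [M_i]^{<\omega}$ for some $i$, so $p_{i+1} \Vdash \dot F(a) = \check b$ for the unique witnessing $b$, and since $p_{i+1}, \dot F, a \in M_{i+2}$ this $b$ is definable in $M_{i+2}$, hence $b \in M_{i+2} \subseteq N$. Thus $p^*$ forces $N$ to be closed under $\dot F$, as desired. The main obstacle I anticipate is the bookkeeping that keeps each $p_i$ inside the correct model $M_{i+1}$; this is precisely what the internal approachability clause $\langle M_j : j < i \rangle \in M_{i+1}$ buys over mere stationarity, and everything else reduces to routine $\mu$-closure.
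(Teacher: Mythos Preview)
The paper states this result as a \emph{Fact} without proof, so there is no in-paper argument to compare against; your outline is the standard master-condition proof, and the overall strategy is correct. There is, however, a genuine gap in your execution.

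You take the chain $\langle M_i : i < \tau\rangle$ witnessing $N \in \mathcal{IA}(\tau)$, so each $M_i \prec H(\Theta)$ and in particular $M_i \subseteq H(\Theta)$. You then claim that ``after a harmless shift'' the parameters $\P$, $\dot F$, and your well-ordering $<^*$ lie in $M_0$. This shift is not harmless: nothing in the hypotheses places $\P$ (or the $\P$-name $\dot F$, or a well-ordering of $\P$) inside $H(\Theta)$, and in the paper's applications $\P$ is typically much larger than $H(\Theta)$. Your invariant $p_i \in M_{i+1}$ then fails at the very first step, since the $<^*$-least condition below $p_0$ deciding the relevant values need not lie in $H(\Theta)$ at all. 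The same problem recurs at every successor and limit stage, because each appeal to elementarity of $M_{i+2}$ (resp.\ $M_{i+1}$) needs $\dot F$ and $<^*$ as parameters.

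The fix is to run the descending-sequence construction along an $\mathcal{IA}(\tau)$-chain for $N^*$ rather than for $N$. When you pick $N^* \prec H(\Theta')$ with $N^* \cap H(\Theta) \in S$, also arrange that $N^*$ itself is the union of a continuous internally approachable chain $\langle M_i^* : i < \tau\rangle$ of elementary submodels of $H(\Theta')$ containing $p, \P, \dot F$ and a well-ordering of $H(\Theta')$; the set of such $N^*$ contains a club in $P_\mu(H(\Theta'))$, so this is compatible with the lifted stationarity of $S$. Now the recursion you wrote goes through with $M_i^*$ in place of $M_i$, and at the end the decided values $\dot F(a)$ lie in $M_{i+2}^* \cap H(\Theta) \subseteq N^* \cap H(\Theta) = N$, which is what you need. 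The hypothesis $N \in \mathcal{IA}(\tau)$ is not used directly; what matters is that a chain of the right length can be built on the $H(\Theta')$ side while still hitting $S$ on the $H(\Theta)$ side.
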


\subsection{Costationarity of the Ground Model}

The notion of ground model costationarity is a key ingredient in arguments pertaining to disjoint stationary sequences. It will specifically give us the disjointness, since we will be picking stationary sets that are not added by initial segments of these forcings.

Gitik obtained the classical result:

\begin{fact}[Gitik \cite{Gitik1985}] If $V \subset W$ are models of $\ZFC$ with the same ordinals, $W \setminus V$ contains a real, and $\kappa$ is a regular cardinal in $W$ such that $(\kappa^+)^W \le \lambda$, then $P^W_\kappa(\lambda) \setminus V$ is stationary as a subset of $P_\kappa(\lambda)$ in the model $W$.\end{fact}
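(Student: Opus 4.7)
The plan is to prove the stationarity statement directly: for every club $C \subseteq P^W_\kappa(\lambda)$ in $W$, I would exhibit some $x \in C$ with $x \notin V$. I would represent $C$ by a function $F \colon [\lambda]^{<\omega} \to \lambda$ in $W$ whose closure points lie in $C$, and fix a real $r \in W \setminus V$.

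The central idea is to construct, in $W$, an $F$-closed set $x \in P^W_\kappa(\lambda)$ that encodes $r$ relative to a rigid, $V$-definable scaffolding of coding ordinals, so that membership of $x$ in $V$ would let $V$ itself decode $r$, contradicting $r \notin V$. Working in $V$, I would set up disjoint coding sets: for each $n < \omega$ and $i \in \{0,1\}$, fix $B^i_n \subseteq \lambda$ of $W$-cardinality at least $(\kappa^+)^W$ (possible because $\lambda \geq (\kappa^+)^W$), with all the $B^i_n$'s pairwise disjoint across $n$ and $i$. Then, in $W$, I would carry out an $\omega$-stage construction: at stage $n$, given an $F$-closed $x_n \in P^W_\kappa(\lambda)$, choose $\beta_n \in B^{r(n)}_n$ so that the $F$-closure $x_{n+1}$ of $x_n \cup \{\beta_n\}$ avoids $B^{1-r(n)}_n$, and set $x = \bigcup_n x_n$. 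This $x$ is $F$-closed by continuity and lies in $P^W_\kappa(\lambda)$ by regularity of $\kappa$ in $W$.

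The principal obstacle, and the technical core of Gitik's argument, will be the existence of a suitable $\beta_n$ at each stage: a priori $F \in W$ could map generously into $B^{1-r(n)}_n$, ruining most candidates. The resolution is a counting-and-refinement argument exploiting that $|x_n|^W < \kappa$ while $|B^{r(n)}_n|^W \geq (\kappa^+)^W$: by pruning to the $\beta$'s whose $F$-closure is suitably bounded, one retains $(\kappa^+)^W$ many survivors and, among these, finds one whose closure does not meet $B^{1-r(n)}_n$. This is where the hypothesis $(\kappa^+)^W \leq \lambda$ is essentially used, and it is the only place the argument is truly delicate.

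Finally, suppose toward contradiction that $x \in V$; then, inside $V$, I can decode $r$ by setting $r(n) = i$ iff $x \cap B^i_n \neq \emptyset$. This recipe uses only parameters already in $V$ (namely $x$ and the matrix $\langle B^i_n \rangle$), so $r \in V$, a contradiction. Hence $x \notin V$, and since $C$ was arbitrary, $P^W_\kappa(\lambda) \setminus V$ is stationary in $W$.
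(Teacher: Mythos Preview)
The paper does not include a proof of this fact; it is stated with a citation to Gitik and used as a black box, so there is nothing in the paper to compare your attempt against directly.

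On its own merits, your outline has the right global shape---encode the new real $r$ into an $F$-closed set $x$ so that $x \in V$ would force $r \in V$---but the step you yourself flag as ``the technical core'' does not work as written. You ask that at stage $n$ one can pick $\beta_n \in B^{r(n)}_n$ so that $x_{n+1} = \cl_F(x_n \cup \{\beta_n\})$ is disjoint from $B^{1-r(n)}_n$. But the coding sets $B^i_n$ are fixed in $V$ \emph{before} the club (and hence $F$) is given, and $F \in W$ is completely arbitrary relative to them. Nothing prevents $F(\emptyset)$ from lying in $B^{1-r(0)}_0$; then already $x_0 \supseteq \cl_F(\emptyset)$ meets $B^{1-r(0)}_0$, and since $x_1 \supseteq x_0$ no choice of $\beta_0$ can undo this. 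More generally, closures taken at stages $m<n$ may already have deposited elements into $B^{1-r(n)}_n$. Your counting sketch (``$\kappa^+$ many survivors with bounded closure'') does not address this: having many candidate $\beta$'s with small closures says nothing about whether any of those closures miss a prescribed set that $F$ is free to target.

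Gitik's argument avoids this trap by not attempting to keep closures disjoint from fixed $V$-sets chosen in advance. One has to arrange, at each stage, two extensions that are distinguishable by a feature recoverable from $V$-parameters, and this is done via a set-mapping argument on a set of size $(\kappa^+)^W$ (which is where the hypothesis $(\kappa^+)^W \le \lambda$ is really used). If you want to salvage your plan, you must replace the ``avoid $B^{1-r(n)}_n$'' requirement with a coding that is robust under taking $F$-closures for an $F$ you do not control.
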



Because we will need \autoref{closed-pres}, we will actually use Krueger's refinement of Gitik's theorem:

\begin{fact}[Krueger \cite{Krueger2009}]\label{kruegergitik} Suppose $V \subset W$ are models of $\ZFC$ with the same ordinals, $W \setminus V$ contains a real, $\mu$ is a regular cardinal in $W$, and $X \in V$ is such that $(\mu^+)^W \subseteq X$, and that in $W$, $\Theta$ is a regular cardinal such that $X \subset H(\Theta)$. Then in $W$ the set $\{N \in P_\mu(H(\Theta)) \cap \mathcal{IA}(\omega):N \cap X \notin V\}$ is stationary. 
\end{fact}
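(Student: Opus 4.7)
The plan is to adapt Gitik's classical argument to obtain the stronger conclusion with internal approachability, by building in $W$ an explicit internally approachable $\omega$-chain whose union $N$ lies in the prescribed club and whose trace $N \cap X$ encodes the new real.

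Given a club $C \subseteq P_\mu(H(\Theta))^W$ and a real $r \in 2^\omega \cap (W \setminus V)$, first pass to a Skolem-type subclub: fix in $W$ a Skolem function $F$ for $(H(\Theta), \in, \triangleleft)$ with respect to some well-ordering $\triangleleft \in W$, and assume $C$ contains every $N \prec H(\Theta)$ of cardinality less than $\mu$ closed under $F$. In $V$, fix a pairwise-disjoint sequence $\seq{(\alpha_n^0, \alpha_n^1)}{n < \omega}$ of pairs of distinct ordinals below $(\mu^+)^W$; these lie in $X$ because $(\mu^+)^W$ is a common ordinal of $V$ and $W$ and $(\mu^+)^W \subseteq X$ by hypothesis. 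In $W$, recursively define $M_0 = \Sk_F(\emptyset)$ and
\[
M_{n+1} = \Sk_F\bigl(M_n \cup \{\alpha_n^{r(n)}, \seq{M_k}{k \le n}\}\bigr),
\]
and set $N = \bigcup_{n < \omega} M_n$. The chain witnesses $N \in \mathcal{IA}(\omega)$, and $N$ is an elementary submodel of $H(\Theta)$ of cardinality less than $\mu$ closed under $F$, so $N \in C$.

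The heart of the proof is verifying $N \cap X \notin V$. The intended decoding is $r(n) = i$ iff $\alpha_n^i \in N$, which requires that the ``unused'' ordinal $\alpha_n^{1-r(n)}$ is never pulled into any later $M_m$. This is the delicate point handled by Gitik's original encoding: one pre-selects the ordinals $\alpha_n^i$ from a sufficiently indiscernible-like $V$-definable set of ordinals below $(\mu^+)^W$ so that no Skolem term in $F$ applied to the growing pool of previously chosen parameters can recover the unused element. Granting this property, the $V$-sequence $\seq{(\alpha_n^0, \alpha_n^1)}{n < \omega}$ together with $N \cap X$ determines $r$, so $N \cap X \in V$ would force $r \in V$, contradicting $r \in W \setminus V$. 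Thus $\{N \in P_\mu(H(\Theta)) \cap \mathcal{IA}(\omega) : N \cap X \notin V\}$ meets $C$, and the stated set is stationary. The main obstacle is precisely this no-accidental-inclusion property for the encoding; everything else (elementarity, closure under $F$, internal approachability of length $\omega$) is immediate from the chain construction.
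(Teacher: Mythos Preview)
The paper does not prove this statement; it is quoted as a Fact with a citation to Krueger's 2009 paper, so there is no ``paper's own proof'' to compare against. I will therefore just assess your argument on its merits.

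Your outline is the right shape---build an $\omega$-chain of Skolem hulls in $W$ whose union lies in the given club, is internally approachable of length $\omega$, and whose trace on $X$ codes the new real $r$---but the step you flag as ``the delicate point'' is a genuine gap, and the fix you gesture at does not work as stated. You propose to \emph{pre-select in $V$} the pairs $(\alpha_n^0,\alpha_n^1)$ from an ``indiscernible-like $V$-definable set'' so that no Skolem term over $F$ recovers the unused ordinal. But $F$ lives in $W$, not in $V$; no amount of $V$-side indiscernibility constrains what a $W$-definable Skolem function can output. Once the pairs are fixed in advance, there is nothing to stop $F$ from mapping, say, $\alpha_0^{r(0)}$ to $\alpha_7^{1-r(7)}$, destroying the decoding.

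The correct maneuver (as in Gitik's proof and Krueger's refinement) is to choose the coding ordinals \emph{dynamically in $W$}, not in advance in $V$. One fixes in $V$ a sufficiently rich combinatorial object inside $X$---e.g.\ an injection of $2^{<\omega}$ into $(\mu^+)^W$, or a family of size-$\mu$ disjoint sets indexed by $2^{<\omega}$---and then at stage $n$, having already built the (size~$<\mu$) hull $M_n$, one selects the next coding ordinal from the appropriate $V$-piece so that it lies \emph{outside} $M_n$. The decoding then does not ask ``is $\alpha_n^i\in N$?'' for a fixed $\alpha_n^i$, but rather recovers a branch through the $V$-tree from $N\cap X$, and that branch computes $r$. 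The largeness of the $V$-pieces (size $\ge\mu$) is what guarantees you can always avoid the current hull; this is where the hypothesis $(\mu^+)^W\subseteq X$ is actually used. Your write-up should replace the hand-wave ``granting this property'' with this dynamic-selection argument.
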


\section{The New Mitchell Forcing}\label{sec-thm1and2}

\subsection{Defining the Forcing}

In this subsection we will illustrate the basic idea of this paper by using our new take on Mitchell forcing to prove a known result:

\begin{fact}[Krueger \cite{Krueger2009}]\label{kruegertheorem} If $\lambda$ is a Mahlo cardinal and $\mu<\lambda$ is regular, there is a forcing extension in which $2^\omega = \mu^+=\lambda$ and there is a disjoint stationary sequence on $\lambda$.\end{fact}

Specifically, we will define a forcing $\M^+(\tau,\mu,\lambda)$ such that the model $W$ in \autoref{kruegertheorem} can be realized as an extension by $\M^+(\omega,\mu,\lambda)$.

For standard technical reasons, we define a poset ismorphic to $\Add(\tau,\lambda)$:

\begin{definition} Given a regular $\tau$ and a set of ordinals $Y$, we let $\Add^*(\tau,Y)$ be the poset consisting of partial functions $p : \{\delta \in Y: \delta \text{ is inaccessible}\} \times \tau \to \{0,1\}$ where $|\dom p|<\tau$. We let $p \le_{\Add^*(\tau,Y)} q$ if and only if $p \supseteq q$.\end{definition}

\emph{Note:} In later subsections we will conflate $\Add(\tau,\lambda)$ and $\Add^*(\tau,\lambda)$ to simplify notation.


\begin{definition} Let $\lambda$ be inaccessible and let $\tau < \mu<\lambda$ be regular cardinals such that $\tau^{<\tau} = \tau$. We define  a forcing $\M^+(\tau,\mu,\lambda)$ that consists of pairs $(p,q)$ such that:

\begin{enumerate}

\item $p \in \Add^*(\tau,\lambda)$,

\item $q$ is a function such that:

\begin{enumerate}

\item $\dom q \subset \{\delta < \lambda: \delta \text{ is inaccessible}\}$,

\item $|\dom q|<\mu$,

\item $\forall \delta \in \dom(q)$, $q(\delta)$ is an $\Add^*(\tau,\delta+1)$-name such that $p \nrest ((\delta + 1) \times \tau) \Vdash_{\Add^*(\tau,\delta+1)}``q(\delta) \in \dot{\Col}(\mu,\delta)$''.

\end{enumerate}
\end{enumerate}

We let $(p,q) \le (p',q')$ if and only if:

\begin{enumerate}[(i)]

\item $p \le_{\Add^*(\tau,\lambda)} p'$,

\item $\dom q \supseteq \dom q'$,

\item for all $\delta \in \dom q', p \nrest ((\delta+1) \times \tau) \Vdash_{\Add^*(\tau,\delta+1)}``q(\delta) \le_{\dot{\Col}(\mu,\delta)}q'(\delta)$''

\end{enumerate}
\end{definition}

First we go through the more routine properties that one would expect of this forcing.

\begin{proposition}  $\M^+(\tau,\mu,\lambda)$ is $\tau$-closed and $\lambda$-Knaster.\end{proposition}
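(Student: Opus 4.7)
The plan is to handle the two properties separately. For $\tau$-closure, given a decreasing sequence $\langle (p_\xi,q_\xi) : \xi<\sigma\rangle$ of length $\sigma<\tau$, I would take $p = \bigcup_\xi p_\xi \in \Add^*(\tau,\lambda)$ (well-defined by the regularity of $\tau$) and $D = \bigcup_\xi \dom q_\xi$, of size $<\mu$ since $\mu$ is regular and $\sigma<\tau\le\mu$. For each $\delta\in D$, the condition $p\rest((\delta+1)\times\tau)$ forces the sequence $\langle q_\xi(\delta)\rangle$ to be $\le_{\Col(\mu,\delta)}$-decreasing of length $<\tau$; since $\Add^*(\tau,\delta+1)$ is $\tau$-closed the sequence already lives in $V$, and the $\mu$-closure of $\Col(\mu,\delta)$ in $V$ supplies a ground-model lower bound, packaged as a name $q(\delta)$.

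For $\lambda$-Knaster, the plan is a standard $\Delta$-system-and-counting argument adapted to the mixed-type conditions. Given $\langle (p_\alpha,q_\alpha) : \alpha<\lambda\rangle$, set $s_\alpha$ equal to $\dom q_\alpha$ together with the projection of $\dom p_\alpha$ to $\lambda$; each $s_\alpha$ has cardinality $<\mu<\lambda$. Since $\lambda$ is regular and $\mu<\lambda$, the $\Delta$-system lemma with the usual refinement yields $Y_0\in [\lambda]^\lambda$ on which the $s_\alpha$ form a $\Delta$-system with root $r$ bounded by some $\xi^*<\lambda$. I would then successively thin to fix the ``root data'': first the subsets $\dom p_\alpha\cap(r\times\tau)$ and $\dom q_\alpha\cap r$, and then the function $p_\alpha\rest(r\times\tau)$. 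Each step loses at most $2^{|r|\cdot\tau}<\lambda$ options, using inaccessibility of $\lambda$ together with $\tau^{<\tau}=\tau$.

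The genuinely delicate step, and the main obstacle, is arranging that for each $\delta\in r$ the name $q_\alpha(\delta)$ does not depend on $\alpha$. My plan is to work modulo \emph{nice names}: without loss of generality each $q_\alpha(\delta)$ is a nice $\Add^*(\tau,\delta+1)$-name for a condition in $\Col(\mu,\delta)$, i.e.\ a function from a maximal antichain of $\Add^*(\tau,\delta+1)$ into $\Col(\mu,\delta)$. Since $\delta<\xi^*<\lambda$ and $\lambda$ is inaccessible, both $|\Add^*(\tau,\delta+1)|$ and $|\Col(\mu,\delta)|$ are $<\lambda$, so on each coordinate the number of such nice names is $<\lambda$, and with $|r|<\mu<\lambda$ the total number of nice-name assignments on $r$ is also $<\lambda$. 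A final pigeonhole produces $Y\in[\lambda]^\lambda$ on which these representatives agree pointwise. For any distinct $\alpha,\beta\in Y$, the pair $(p_\alpha\cup p_\beta,\, q_\alpha\cup q_\beta)$ is then a well-defined common extension. Everything outside the name-matching is routine bookkeeping; inaccessibility of $\lambda$ is exactly what lets the name-counting close.
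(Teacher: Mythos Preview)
Your proposal is correct and follows essentially the same approach as the paper, whose proof is a one-line sketch citing the $\tau$-closure of $\Add^*(\tau,\lambda)$, the forced $\mu$-closure of $\dot{\Col}(\mu,\delta)$, and ``a standard application of the Delta System Lemma.'' Your detailed treatment of the Knaster part via $\Delta$-system refinement and nice-name counting on the root is exactly what that phrase unpacks to; the only stylistic difference is that for $\tau$-closure the paper appeals directly to the forced $\mu$-closure of $\dot{\Col}(\mu,\delta)$ rather than routing through ``the sequence already lives in $V$,'' but both justifications work.
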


\begin{proof} Closure uses the facts that $\Add^*(\tau,\lambda)$ is $\tau$-closed and $\Vdash_{\Add^*(\tau,\delta+1)}``\dot{\Col}(\mu,\delta)$ is $\mu$-closed'' for all $\delta$. For Knasterness: Consider $\{(p_i,q_i):i<\lambda\} \subseteq \M^+(\tau,\mu,\lambda)$, then fix a regular $\rho \in (\mu,\lambda)$ and find a stationary subset of $\lambda \cap \cof(\rho)$ on which $\dom(p_i),\dom(q_i)$ are fixed, and then proceed with a standard Delta System Lemma argument.\end{proof}

Crucially, we get a nice termspace:

\begin{definition} Let $\T=\T(\M^+(\tau,\mu,\lambda))$ be the poset consisting of conditions $q$ such that:

\begin{enumerate}

\item $\dom q \subset \{\delta < \lambda: \delta \textup{ is inaccessible}\}$,

\item $|\dom q| <\mu$,

\item $\forall \delta \in \dom q, \Vdash_{\Add^*(\tau,\delta+1)}q(\delta) \in \dot{\Col}(\mu,\delta)$''.

\end{enumerate}

Most importantly, we let $q \le q'$ if and only if:

\begin{enumerate}[(i)]

\item $\dom q \supseteq \dom q'$,

\item for all $\delta \in \dom q$, $\Vdash_{\Add^*(\tau,\delta+1)} ``q(\delta) \le q'(\delta)$''.

\end{enumerate}
\end{definition}

\begin{proposition} There is a projection $\Add^*(\tau,\lambda) \times \T(\M^+(\tau,\mu,\lambda)) \surj \M^+(\tau,\mu,\lambda)$.\end{proposition}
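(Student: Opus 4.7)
The plan is to define $\pi\colon \Add^*(\tau,\lambda) \times \T(\M^+(\tau,\mu,\lambda)) \to \M^+(\tau,\mu,\lambda)$ by $\pi(p,q) = (p,q)$. This is well-defined: any $q \in \T$ satisfies $\fr_{\Add^*(\tau,\delta+1)} q(\delta) \in \d{\Col}(\mu,\delta)$, which trivially implies $p \nrest ((\delta+1) \times \tau) \fr q(\delta) \in \d{\Col}(\mu,\delta)$, matching clause (2c) of the $\M^+$ definition. Order preservation is then a clause-by-clause comparison of the two orderings, using the same principle that forcing by $\mathbb{1}$ implies forcing by the restriction $p_1 \nrest ((\delta+1) \times \tau)$.

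The substantive content is the lifting property. Given $(p_0, q_0) \le \pi(p,q) = (p,q)$ in $\M^+$, I construct a lift $(p_0, q^*) \in \Add^* \times \T$ via a mixing argument. For each $\delta \in \dom q_0$, let $q^*(\delta)$ be the $\Add^*(\tau,\delta+1)$-name that equals $q_0(\delta)$ below $p_0 \nrest ((\delta+1) \times \tau)$, equals $q(\delta)$ elsewhere when $\delta \in \dom q$, and equals the trivial condition of $\d{\Col}(\mu,\delta)$ otherwise. Since $p_0 \nrest ((\delta+1) \times \tau) \fr q_0(\delta) \in \d{\Col}(\mu,\delta)$ while $q \in \T$ forces $q(\delta)$ to be a collapse condition globally, the name $q^*(\delta)$ is globally forced to lie in $\d{\Col}(\mu,\delta)$, so $q^* \in \T$. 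One then checks $q^* \le q$ in $\T$ using $p_0 \nrest ((\delta+1) \times \tau) \fr q_0(\delta) \le q(\delta)$ for $\delta \in \dom q$ on the ``below $p_0 \nrest$'' piece and the trivial inequality $q(\delta) \le q(\delta)$ elsewhere; and $(p_0, q^*) \le (p_0, q_0)$ in $\M^+$ holds by construction on the ``below $p_0 \nrest$'' piece.

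For onto-ness, applying the same mixing construction to an arbitrary $(p,q) \in \M^+$ (with the trivial collapse condition used above $p \nrest ((\delta+1)\times\tau)$) yields $q^\dagger \in \T$ such that $(p, q^\dagger)$ and $(p,q)$ are each below the other in $\M^+$, which is the standard sense in which such projections are onto. The principal obstacle is in the lifting step: $q^*$ must simultaneously lie in $\T$, satisfy a $\T$-inequality against $q$ (forced by $\mathbb{1}$), and satisfy an $\M^+$-inequality against $q_0$ (only forced by $p_0 \nrest$). These competing constraints are what dictate the piecewise case split between $\dom q$ and $\dom q_0 \setminus \dom q$; once one settles on the correct mixing, the verifications are mechanical.
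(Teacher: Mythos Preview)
Your proof is correct and follows essentially the same approach as the paper: define $\pi(p,q)=(p,q)$, observe order-preservation since the product ordering refines the $\M^+$ ordering, and for the lifting step construct the required $q^*$ coordinate-by-coordinate via a mixing argument (what the paper calls ``standard arguments on names''). Your write-up is in fact more explicit than the paper's, spelling out the case split between $\dom q$ and $\dom q_0\setminus\dom q$ and addressing surjectivity directly.
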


\begin{proof} We let $\pi$ be the projection with the definition $\pi(p,q) = (p,q)$. This is automatically order-preserving because the ordering $\le_{\Add^*(\tau,\lambda) \times \T}$ is coarser than the ordering $\le_{\M^+(\tau,\mu,\lambda)}$. For obtaining the density condition, suppose $(r,s) \le_{\M^+(\tau,\mu,\lambda)} (p_0,q_0)$. We want to find some $(p_1,q_1)$ such that $(p_1,q_1) \le_{\Add^*(\tau,\lambda) \times \T} (p_0,q_0)$ and $(p_1,q_1) \le_{\M^+(\tau,\mu,\lambda)} (r,s)$. To do this, we first let $p_1 = r$, and then we define $q_1$ with $\dom q_1 = \dom r$ such that at each coordinate $\delta \in \dom q_1$, we use standard arguments on names to show that we can get both $p_0 \nrest ((\delta+1) \times \tau) \Vdash_{\Add^*(\tau,\lambda)} ``q_1(\delta) \le s(\delta)$'' as well as $1_{\Add^*(\tau,\lambda)} \Vdash_{\Add^*(\tau,\lambda)}``q_1(\delta) \le q_0(\delta)$''.\end{proof}

\begin{proposition} $\T=\T(\M^+(\tau,\mu,\lambda))$ is $\mu$-closed. \end{proposition}

\begin{proof} This is an application of the Mixing Principle. Given a $\le_\T$-decreasing sequence $\seq{q_i}{i<\tau}$ with $\tau<\mu$ we let $d = \bigcup_{i<\tau}\dom q_i$. Then we define a lower bound $\bar q$ with domain $d$ such that for all $\delta \in d$, $q(\delta)$ is a canonically-defined name for a lower bound of the $q_i(\delta)$'s (where $i$ is large enough that $\delta \in \dom q_i$).\end{proof}

Then we get the standard consequences of the termspace analysis:

\begin{proposition} The following are true in any extension by $\M^+(\tau,\mu,\lambda)$:

\begin{enumerate}
\item $V$-cardinals up to and including $\mu$ are cardinals.
\item For all $\alpha<\lambda$, $|\alpha|=\mu$.
\item $\lambda = \mu^+$.
\item $2^\tau = \lambda$.
\end{enumerate}
\end{proposition}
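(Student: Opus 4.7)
The plan is to derive each clause from the projection analysis already established, appealing to standard facts about chain conditions and closure. For $(1)$, since $\M^+(\tau,\mu,\lambda)$ is a projection of $\Add^*(\tau,\lambda) \times \T$, any cardinal preserved by the product is preserved by $\M^+$. The factor $\Add^*(\tau,\lambda)$ is $\tau$-closed and, via a $\Delta$-system argument using $\tau^{<\tau}=\tau$, satisfies the $\tau^+$-chain condition, hence preserves every cardinal. The factor $\T$ is $\mu$-closed, so it preserves cardinals $\leq \mu$, and being closed under $\tau$-sequences it preserves the $\tau^+$-cc of $\Add^*(\tau,\lambda)$ when we pass to $V^{\T}$. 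Thus the product preserves all cardinals $\leq \mu$.

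For $(2)$, let $\alpha < \lambda$; using the Mahloness of $\lambda$, pick an inaccessible $\delta$ with $\alpha < \delta < \lambda$. The conditions $(p,q) \in \M^+$ with $\delta \in \dom q$ are dense (extend by the trivial condition at $\delta$), and on this dense set the map $(p,q) \mapsto (p \nrest ((\delta+1) \times \tau),\, q(\delta))$ is a projection of $\M^+(\tau,\mu,\lambda)$ onto $\Add^*(\tau,\delta+1) \ast \dot{\Col}(\mu,\delta)$. The latter collapses $\delta$ to cardinality $\mu$, and since such inaccessibles $\delta$ are cofinal in $\lambda$, every ordinal in $[\mu,\lambda)$ has cardinality at most $\mu$; combined with $(1)$ this gives equality. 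Clause $(3)$ then follows: $\lambda$-Knasterness gives the $\lambda$-cc, so $\lambda$ is preserved, and together with $(1)$ and $(2)$ this pins it down as $\mu^+$ in the extension.

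For $(4)$, the lower bound $2^\tau \geq \lambda$ is immediate since $\Add^*(\tau,\lambda)$ adds pairwise distinct Cohen subsets of $\tau$, one per inaccessible $\delta < \lambda$. For the upper bound, use the $\lambda$-cc together with $|\M^+(\tau,\mu,\lambda)| = \lambda$ (which follows from inaccessibility of $\lambda$ and a coordinatewise size bound on the name components of $q$) to count nice names for subsets of $\tau$: each such name is coded by a $\tau$-sequence of antichains of size $<\lambda$, yielding at most $\lambda^\tau = \lambda$ names. The main obstacle in this proposition is the projection check in $(2)$; one must unpack the definitions to verify that the natural restriction really does satisfy the density clause of a projection onto $\Add^*(\tau,\delta+1) \ast \dot{\Col}(\mu,\delta)$, after which the remaining items reduce to standard bookkeeping with products, closure, and chain conditions.
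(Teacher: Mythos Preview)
Your proof is correct and follows essentially the same route as the paper: use the projection onto $\Add^*(\tau,\lambda)\times\T$ for (1), a projection onto a collapse for (2), combine (1), (2) with $\lambda$-Knasterness for (3), and use the projection onto $\Add^*(\tau,\lambda)$ plus the size of the poset for (4). You simply supply more detail than the paper, which dispatches each item in a single line.

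One small remark: in (2) you invoke Mahloness of $\lambda$ to find an inaccessible $\delta$ above $\alpha$, but the proposition as stated only assumes $\lambda$ is inaccessible. All that is actually needed for (2) and for the lower bound in (4) is that the inaccessibles below $\lambda$ are unbounded in $\lambda$; Mahloness is stronger than necessary here. The paper's own proof glosses over this point as well (it just says ``for all inaccessible $\delta<\lambda$, $\M^+$ projects onto $\Col(\mu,\delta)$''), presumably because in every application $\lambda$ is Mahlo anyway.
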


\begin{proof} \emph{(1)} follows from the projection analysis and the fact that $\T$ is $\mu$-closed and $\Add^*(\tau,\lambda)$ is $\tau^+$-cc, and from $\tau$-closure of $\M^+(\tau,\mu,\lambda)$. \emph{(2)} follows from the fact that for all inaccessible $\delta<\lambda$, $\M^+(\tau,\mu,\lambda)$ projects onto the termspace forcing $\T(\M^+(\tau,\mu,\lambda))$. \emph{(3)} follows from \emph{(1)} and \emph{(2)} plus $\lambda$-Knasterness. \emph{(4)} follows from the fact that $\M^+(\mu,\lambda)$ projects onto $\Add^*(\tau,\lambda)$, so it forces that $2^\tau \ge \lambda$. Since the poset has size $\lambda$ and $\lambda$ is inaccessible, it also forces that $2^\tau \le \lambda$.\end{proof}

%
%
%
%
%
%
%
%

The following lemma is the crux of the new idea.

\begin{lemma}\label{mainlemma} If $\delta_0 < \lambda$ is inaccessible, then there is a forcing equivalence
\[
\M^+(\tau,\mu,\lambda) \simeq \M^+(\tau,\mu,\delta_0) \ast \Add(\tau) \ast \mathbb{E}
\]
where $\M^+(\tau,\mu,\delta_0) \ast \Add(\tau)$ forces that $\mathbb{E}$ is a projection of a product of a $\mu$-closed forcing and a $\tau^+$-cc forcing.\end{lemma}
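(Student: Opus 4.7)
The plan is to split each condition $(p,q) \in \M^+(\tau,\mu,\lambda)$ along $\delta_0$ and identify the three resulting pieces with the three steps of the iteration. Given $(p,q)$, decompose $p = p^- \cup c \cup p^+$ where $p^- = p \nrest (\delta_0 \times \tau)$, $c = p \nrest (\{\delta_0\} \times \tau)$ (naturally an element of $\Add(\tau)$ since $\delta_0$ is inaccessible), and $p^+ = p \nrest ((\lambda \setminus (\delta_0+1)) \times \tau)$; analogously split $q$ into $q^- = q \nrest \delta_0$, the value $q(\delta_0)$ (if it exists), and $q^+ = q \nrest (\delta_0, \lambda)$. Then $(p^-, q^-)$ is a condition of $\M^+(\tau,\mu,\delta_0)$, $c$ is a condition of $\Add(\tau)$, and the remaining data should become a condition of the quotient $\mathcal{Q}$. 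The crucial observation is that the compatibility requirement $p \nrest ((\delta+1) \times \tau) \Vdash ``q(\delta) \in \dot{\Col}(\mu,\delta)$'' for $\delta > \delta_0$ factors naturally through $\Add^*(\tau,\delta_0) \ast \Add(\tau) \ast \Add^*(\tau, (\delta_0, \delta+1))$, while the requirement on $q(\delta_0)$ involves only $p^- \cup c$, so it slots into $\mathcal Q$ immediately after the $\Add(\tau)$ step.

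Working in the $\M^+(\tau,\mu,\delta_0) \ast \Add(\tau)$-extension, I define $\mathcal{Q}$ to consist of pairs $(p', q')$ with $p' \in \Add^*(\tau, \{\delta \in (\delta_0,\lambda) : \delta \text{ inaccessible}\})$ and $q'$ a $\mu$-small function whose domain lies in $\{\delta \in [\delta_0,\lambda) : \delta \text{ inaccessible}\}$, subject to the requirement that $p' \nrest ((\delta+1) \times \tau)$ (joined with the already-forced Cohen generic at $\delta_0$ in the case $\delta = \delta_0$) forces $q'(\delta) \in \dot{\Col}(\mu,\delta)$; the ordering copies that of $\M^+$. The map $\Phi$ sending $(p,q)$ to the triple whose first two coordinates are $(p^-,q^-)$ and $c$ and whose third coordinate is a canonical name for $(p^+, q(\delta_0) \cup q^+)$ is order-preserving by direct inspection, and it has dense image because any condition in the iteration can be strengthened to land in the range of $\Phi$ by decoding the third-coordinate name below the first two steps and, for the $q'$-component, applying the Mixing Principle to merge the coordinate-wise values into a single coherent function. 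Since $\Phi$ also preserves incompatibility, it is a dense embedding, yielding the claimed forcing equivalence.

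For the projection analysis of $\mathcal Q$, set $\A = \Add^*(\tau, \{\delta \in (\delta_0,\lambda) : \delta \text{ inaccessible}\})$ in $V[\M^+(\tau,\mu,\delta_0) \ast \Add(\tau)]$, and let $\T'$ be the analogous termspace forcing for the $q'$-coordinate; then $\A$ is $\tau^+$-cc by a standard $\Delta$-system argument and $\T'$ is $\mu$-closed by the same Mixing Principle calculation used above to show $\T(\M^+(\tau,\mu,\lambda))$ is $\mu$-closed. The identity assignment $(p',q') \mapsto (p',q')$ is then a projection $\A \times \T' \surj \mathcal Q$, verified by essentially the same density argument as for the global product-to-$\M^+$ projection in the previous proposition. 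The main obstacle I anticipate is not conceptual but the name-bookkeeping of the first two paragraphs: verifying that names originally parameterized by $\Add^*(\tau, \delta+1)$ translate faithfully through the factorization of the three-step iteration, and in particular that $q(\delta_0)$ is routed cleanly into the $\mathcal Q$-component rather than ambiguously straddling the first two steps.
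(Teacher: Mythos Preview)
Your decomposition and the trivial-projection/dense-embedding argument are essentially the paper's own; the split $(p^-,q^-)$, $c$, $(p^+, q(\delta_0)\cup q^+)$ is exactly what the paper writes down, and your $\mathcal{Q}$ is the paper's $\mathcal{P}\times\mathcal{R}$ with $\mathcal{P}=\Col(\mu,\delta_0)^{V[\Add^*(\tau,\delta_0+1)]}$ sitting at the $\delta_0$-coordinate of your $q'$.

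There is, however, a real gap at precisely the point you flag only as ``name-bookkeeping.'' The claim that $\T'$ is $\mu$-closed in $V[\M^+(\tau,\mu,\delta_0)][\Add(\tau)]$ does \emph{not} follow from the Mixing Principle calculation used for the global $\T$. At coordinates $\delta>\delta_0$ your argument is fine: those entries are still names for a residual Cohen forcing, and mixing applies. But at $\delta=\delta_0$ there is nothing left to mix over: the generic for $\Add^*(\tau,\delta_0+1)$ is already fixed by the first two steps, so the $\delta_0$-coordinate of $\T'$ is literally the evaluated poset $\Col(\mu,\delta_0)^{V[\Add^*(\tau,\delta_0+1)]}$. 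You now need to know that this poset remains $\mu$-closed in $V[\M^+(\tau,\mu,\delta_0)][\Add(\tau)]$, which is strictly larger than $V[\Add^*(\tau,\delta_0+1)]$ by the collapse part of $\M^+(\tau,\mu,\delta_0)$.

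This is a genuine statement that requires proof, and it is where the paper spends its effort. Writing $V[\M^+(\tau,\mu,\delta_0)] = V[H][K]$ with $H$ the induced $\Add^*(\tau,\delta_0)$-generic and $K$ the quotient, and $r$ the extra $\Add(\tau)$-generic, one must show $K$ adds no ${<}\mu$-sequences over $V[H][r]$. The paper does this by sandwiching: $V[H][r]\subseteq V[H][r][K]\subseteq V[H][r][T]$ where $T$ is a generic for the $\mu$-closed termspace $\T(\M^+(\tau,\mu,\delta_0))$, and then Easton's Lemma (applied to the $\tau^+$-cc $\Add^*(\tau,\delta_0+1)$ against the $\mu$-closed $T$) gives that $T$, hence $K$, adds no ${<}\mu$-sequences over $V[H][r]$. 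Without this step your projection analysis of $\mathcal{Q}$ is incomplete.
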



%
%

\begin{proof} More precisely, we will show that there is a forcing equivalence $\M^+(\tau,\mu,\lambda) \simeq \M^+(\tau,\mu,\delta_0) \ast \Add(\tau) \ast (\mathbb{F} \times \mathbb{G})$ where the following hold in the extension by $\M^+(\tau,\mu,\delta_0) \ast \Add(\tau)$:

\begin{itemize}
\item $\mathbb{G}$ is a projection of a product of a $\mu$-closed forcing and $\Add^*(\tau,\lambda)$, and
\item $\mathbb{F}$ is $\mu$-closed.
\end{itemize}

The statement of the lemma can then be obtained by merging $\mathbb{F}$ with the closed component of the product that projects onto $\mathbb{G}$.

First we describe $\mathbb{F}$ and $\mathbb{G}$. To do this, we fix some notation. Given $Y \subseteq \lambda$, we let $\pi_\Add^Y$ denote the projection $(p,q) \mapsto p \nrest (Y \times \tau)$ from $\M^+(\tau,\mu,\lambda)$ onto $\Add^*(\tau,Y)$. For any poset $\P$, we employ the convention that $\Gamma(\P)$ denotes a canonical name for a $\P$-generic. If $X \subset \P$, then we use the notation $\uparrow X :=\{q \in \P : \exists p \in X, p \le q\}$.

We will let
\[
\mathbb{F}:= \Col(\mu,\delta_0)^{V[(\uparrow( \pi_\Add^{\delta_0}"\Gamma(\M^+(\tau,\mu,\delta_0)))) \times \Gamma(\Add(\tau))]}
\]
if we are working in an extension by $\M^+(\tau,\mu,\delta_0) \ast \Add(\tau)$. (In other words, the poset $\mathbb{F}$ will be the version of $\Col(\mu,\delta_0)$ as interpreted in the extension of $V$ by $\Add^*(\tau,\delta_0+1)$ where the initial coordinates come from $\M^+(\tau,\mu,\delta_0)$ and the last coordinate comes from the additional copy of $\Add(\tau)$ that occupies the coordinate $\delta_0$ in $\Add^*(\tau,\delta_0+1)$.)

Still working in an extension by $\M^+(\tau,\mu,\delta_0) \ast \Add(\tau)$, the poset $\mathbb{G}$ consists of pairs $(p,q)$ such that the following hold:

\begin{enumerate}

\item $p \in \Add^*(\tau,(\delta_0,\lambda))$,

\item $q$ is a function such that

\begin{enumerate}[(a)]


\item $\dom q \subset \{\delta \in (\delta_0,\lambda):\delta \text{ is inaccessible}\}$,

\item $|\dom q| < \mu$,

\item $\forall \delta \in \dom(q),p \nrest ((\delta_0,(\delta + 1)) \times \tau) \Vdash_{\Add^*(\tau,(\delta_0,\delta+1))}``q(\delta) \in \dot{\Col}(\mu,\delta)$''.

\end{enumerate}
\end{enumerate}

The ordering is the one analogous to that of $\M^+(\tau,\mu,\lambda)$. An easy adaptation of the arguments for the projection analysis for $\M^+(\tau,\mu,\lambda)$ will then give a projection analysis for $\mathbb{G}$.

The rest of the proof of the lemma consists of verifying the more substantial claims.

\begin{claim} $ \M^+(\tau,\mu,\lambda) \simeq \M^+(\tau,\mu,\delta_0) \ast \Add(\tau,1) \ast ( \mathbb{F} \times \mathbb{G})$.\end{claim}

\begin{proof} We identify $\M^+(\tau,\mu,\delta_0) \ast \Add(\tau,1) \ast ( \mathbb{F} \times \mathbb{G})$ with the dense subset of conditions $((r,s),t,u,(\dot{r}',\dot{s}'))$ such that $\dot{s}'$ is forced to have a specific domain in $V$. The fact that this subset is dense follows from the fact that $\M^+(\tau,\mu,\lambda) \ast \Add(\tau,1)$ has the $\mu$-covering property.

We will argue that there is a trivial projection defined by
\[
\pi:(p,q) \mapsto (\underbrace{(p \nrest (\delta_0 \times \tau),q \nrest \delta_0)}_{\M^+(\mu,\delta_0)},\underbrace{p \nrest (\{\delta_0\} \times \tau)}_{\Add(\tau)}, \underbrace{q^*(\delta_0)}_{\mathbb{F}},\underbrace{(\bar{p},\bar{q})}_{\mathbb{G}})
\]
such that
\begin{itemize}
\item $\bar p:= p \nrest ((\delta_0,\lambda) \times \tau)$;
\item $q^*(\delta_0)$ is obtained by changing $q(\delta_0)$ from an $\Add^*(\tau,\delta_0+1)$-name to an $\Add(\tau)$-name as interpreted in the extension by the relevant generic, namely $(\uparrow( \pi_\Add^{\delta_0}"\Gamma(\M^+(\tau,\mu,\delta_0))))$;
\item $\bar q$ has domain $(\delta_0,\lambda)$, and for each $\delta \in (\delta_0,\lambda)$, $\bar q(\delta)$ has changes analogous to the changes made to $q^*(\delta_0)$. 
\end{itemize}

It is clear that $\pi$ is order-preserving. We also want to show that if
\[
((r,s),t,u,(\dot{r}',\dot{s}')) \le_{\M^+(\tau,\mu,\delta_0) \ast \Add(\tau) \ast (\mathbb{F} \times \mathbb{G})} \pi(p_0,q_0)
\]
then there is some $(p_1,q_1) \le_{\M^+(\mu,\lambda)} (p_0,q_0)$ such that we have $\pi(p_1,q_1) \le ((r,s),t,u,(\dot{r}',\dot{s}'))$. This can be done by taking:

\begin{itemize}
\item $p_1 =r^* \cup \tilde{t} \cup r'$ where $r^* \le r$ decides $t$ and $\dot{r}'$ and $\tilde{t}$ writes $t$ as as a partial function $\{\delta\} \times \tau \to \{0,1\}$,
\item $q_1 = s \cup \tilde{u} \cup \tilde{s}'$ where $\tilde u$ reinterprets $u$ as a $\Add^*(\delta_0+1)$-name and for each $\delta \in \dom(\dot{s}')$, $\tilde{s}'$ reinterprets $\dot{s}'(\delta)$ as a $\Add^*(\delta+1)$-name.
\end{itemize}

Last, we argue that $\pi(p_0,q_0)=\pi(p_1,q_1)$ implies that $(p_0,q_0)$ and $(p_1,q_1)$ are compatible. Suppose that $(p_0,q_0)$ and $(p_1,q_1)$ are incompatible. If $p_0$ and $p_1$ are incompatible as elements of $\Add^*(\tau,\lambda)$, then one of $p_i \nrest (\delta_0 \times \tau)$, $p_i \nrest (\{\delta_0\} \times \tau)$, and $p_i \nrest ((\delta_0,\lambda) \times \tau)$ must be distinct for $i=0$ and $i=1$. Otherwise, there is some $p' \le p_0,p_1$ and some $\delta \in \dom q_0 \cap \dom q_1$ inaccessible such that $p' \Vdash ``q_0(\delta) \perp q_1(\delta)$'', which implies that $q_0(\delta) \ne q_1(\delta)$. Therefore, one of $q_i \nrest \delta_0$, $q_i(\delta_0)$, or $q_i \nrest (\delta_0,\lambda)$ is distinct for $i \in \{0,1\}$. \end{proof}

\begin{claim} $V[\M^+(\tau,\mu,\delta_0)][\Add(\tau,1)] \models ``\mathbb{F}$ is $\mu$-closed''.\end{claim}


\begin{proof} In fact, our argument will also show that
\[
V[\M^+(\tau,\mu,\delta_0)][\Add(\tau,1)] \models `` \mathbb{F} = \Col(\mu,\delta_0)\text{''}.
\]
We fix some arbitrary generics:

\begin{itemize}
\item $G$ is $\M^+(\tau,\mu,\delta_0)$-generic over $V$,
\item $r$ is $\Add(\tau,1)$-generic over $V[G]$,
\item $H$ is the $\Add^*(\tau,\delta_0)$-generic induced from $G$ by $\pi_\Add^{\delta_0}$,
\item $K$ is the generic for the quotient of $\M^+(\tau,\mu,\delta_0)$ by $\Add^*(\tau,\delta_0)$, i.e$.$ the generic such that $V[H][K]=V[G]$,
\item $T$ is the generic for the termspace forcing $\T(\M^+(\tau,\mu,\delta_0))$, so that $V[G] \subset V[T][H]$.
\end{itemize}

It is enough to argue that $V[G][r] \models ``\mathbb{F}$ is $\mu$-closed'' knowing that $V[H][r] \models ``\mathbb{F}$ is $\mu$-closed''. Because adjoining $G$ does not change the definition of $\Add(\tau,1)$, and because $K$ is defined in terms of the subsets of $\tau$ adjoined by the filter $H$, we have $V[G][r] = V[H][K][r]=V[H][r][K]$. Therefore, it is enough to show that $K$ does not add $<\!\mu$-sequences over $V[H][r]$, so that $V[H][r]$'s version of $\Col(\mu,\delta_0)$ remains $\mu$-closed in $V[G][r]$. We have
\begin{multline*}
V[H][r] \subset V[H][r][K] = V[H][K][r]  = \\ = V[G][r] \subset V[T][H][r] = V[H][r][T].
\end{multline*}
Recall Easton's Lemma, which states in part that if $\mathbb{A}$ is $\mu$-cc and $\mathbb{B}$ is $\mu$-closed, then $\Vdash_{\mathbb{A}}\textup{``}\mathbb{B}$ is $\mu$-distributive$\textup{''}$. Easton's Lemma implies that $T$ does not add new $<\!\mu$-sequences over $V[H][r]$ since the forcing adjoining $r$ is $\mu$-cc over $V[H]$ and the forcing adjoining $T$ is $\mu$-closed over $V[H]$. Therefore $K$ does not add new $<\!\mu$-sequences over $V[H][r]$ since it is an intermediate factor of the extension.\end{proof}

This completes the proof of the lemma.\end{proof}

Now we have an application for the case where $\tau = \omega$.

\begin{proposition}\label{wehaveit} If $\lambda$ is Mahlo then $\Vdash_{\M^+(\omega,\mu,\lambda)} \DSS(\lambda)$.\end{proposition}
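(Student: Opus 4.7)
The plan is to index a disjoint stationary sequence on $\lambda$ by $S := \{\delta < \lambda : \delta \text{ is } V\text{-inaccessible with } \delta > \mu\}$, defining each $\mathcal{S}_\delta$ from subsets of $\delta$ that appear only after the Cohen stage at $\delta$ in the factorization of \autoref{mainlemma}. Let $G$ be $\M^+(\omega,\mu,\lambda)$-generic over $V$. Since $\lambda$ is Mahlo, $S$ is stationary in $V$, and by $\lambda$-Knasterness of $\M^+(\omega,\mu,\lambda)$ it remains stationary in $V[G]$. Moreover $\cf^{V[G]}(\delta) = \mu$ for every $\delta \in S$ because $\M^+(\omega,\mu,\lambda)$ projects onto $\Col(\mu,\delta)$, so $S \subseteq \lambda \cap \cof(\mu)$ as the first clause of a disjoint stationary sequence requires.

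For each $\delta \in S$, apply \autoref{mainlemma} with $\delta_0 = \delta$ to factor $\M^+(\omega,\mu,\lambda) \simeq \M^+(\omega,\mu,\delta) \ast \Add(\omega) \ast \mathcal{Q}_\delta$, where $\mathcal{Q}_\delta$ projects from $\mathcal{P}_\delta \times \mathcal{R}_\delta$ with $\mathcal{P}_\delta$ being $\mu$-closed and $\mathcal{R}_\delta$ being $\omega_1$-cc. Let $W_\delta$ be the $\M^+(\omega,\mu,\delta)\ast\Add(\omega)$-extension, so in $W_\delta$ we have $\mu^+=\delta$ and a new real over $V[G_\delta]$. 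For a sufficiently large regular $\Theta$ in $W_\delta$, \autoref{kruegergitik} applied with $X = \delta$ yields that
\[
T_\delta := \{N \in P_\mu(H(\Theta))^{W_\delta} \cap \mathcal{IA}(\omega) : N \cap \delta \notin V[G_\delta]\}
\]
is stationary in $W_\delta$. Since $T_\delta \subseteq \mathcal{IA}(\omega)$, forcing $\mathcal{P}_\delta$ first preserves this stationarity by \autoref{closed-pres}, and then forcing $\mathcal{R}_\delta$ (which remains $\omega_1$-cc over $W_\delta[\mathcal{P}_\delta]$) preserves it by \autoref{chain-pres}; because $V[G]$ sits inside the $\mathcal{P}_\delta \times \mathcal{R}_\delta$-extension of $W_\delta$ via the projection from $\mathcal{Q}_\delta$, the stationarity transfers to $V[G]$.

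Define $\mathcal{S}_\delta := \{N \cap \delta : N \in T_\delta\} \subseteq P_\mu(\delta)^{V[G]}$. To see $\mathcal{S}_\delta$ is stationary in $P_\mu(\delta)^{V[G]}$, given $F : [\delta]^{<\omega} \to \delta$ in $V[G]$ extend it trivially to a $V[G]$-function $F^*$ on $H(\Theta)^{W_\delta}$; stationarity of $T_\delta$ produces $N \in T_\delta$ closed under $F^*$, and then $a := N \cap \delta$ has size ${<}\mu$, is closed under $F$, and lies in $\mathcal{S}_\delta$. For disjointness, if $\delta < \delta'$ are both in $S$ then $W_\delta \subseteq V[G_{\delta'}]$ because $\M^+(\omega,\mu,\delta')$ already contains both the $\M^+(\omega,\mu,\delta)$-generic and the Cohen column at $\delta$; thus each element of $\mathcal{S}_\delta$ lies in $V[G_{\delta'}]$ while each element of $\mathcal{S}_{\delta'}$ does not, giving $\mathcal{S}_\delta \cap \mathcal{S}_{\delta'} = \emptyset$. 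The main obstacle is making Krueger-Gitik's stationarity, which is only available in $W_\delta$, survive to act against $V[G]$-clubs on $P_\mu(\delta)^{V[G]}$; this is exactly what the $\mathcal{P}\times\mathcal{R}$ preservation from \autoref{mainlemma} together with the extension-of-function trick delivers.
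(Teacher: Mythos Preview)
Your proof is correct and follows essentially the same approach as the paper: index the sequence by the $V$-inaccessibles below $\lambda$, use \autoref{kruegergitik} in the intermediate model $V[\bar G][r]$ to pick an $\mathcal{IA}(\omega)$ stationary set whose traces on $\delta$ avoid $V[\bar G]$, and then push stationarity through the quotient via \autoref{mainlemma} together with \autoref{closed-pres} and \autoref{chain-pres}. You supply more explicit detail than the paper does on why $S$ stays stationary, why $S \subseteq \cof(\mu)$, and especially on disjointness (the paper simply asserts that $\seq{\mathcal{S}_\delta}{\delta \in S}$ is a disjoint stationary sequence), and your function-extension step to pull stationarity back to $P_\mu(\delta)^{V[G]}$ is the standard projection argument that the paper leaves implicit.
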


This basically repeats Krueger's argument for \cite[Theorem 9.1]{Krueger2009}.

\begin{proof} Let $G$ be $\M^+(\omega,\mu,\lambda)$-generic over $V$. The set of $V$-inaccessibles in $\lambda$ will form the stationary set $S \subset \mu^+ \cap \cof(\mu)$ carrying the disjoint stationary sequence in the extension by $\M^+(\omega,\mu,\lambda)$. For every such $\delta \in S$, let $\bar{G}$ be the generic on $\M^+(\omega,\mu,\delta)$ induced by $G$ and let $r$ be the $\Add(\omega)$-generic induced by $G$ via $\pi_\Add^{\{\delta\}}$. We use \autoref{kruegergitik} to obtain a stationary set $\mathcal{S}^*_\delta \subset P_\mu(H(\delta))^{V[\bar G][r]}$ such that for all $N \in \mathcal{S}^*_\delta$, $N \cap \delta \notin V[\bar G]$ and such that $N$ is also internally approachable by a $\omega$-sequence. Therefore we can apply \autoref{mainlemma} with \autoref{closed-pres} and then \autoref{chain-pres} to find that $\mathcal{S}_\delta^*$ is stationary in $V[G]$. We then let $\mathcal{S}_\delta = \{N \cap \delta:N \in \mathcal{S}_\delta^*\}$, and we see that $\seq{\mathcal{S}_\delta}{\delta \in S}$ is a disjoint stationary sequence.\end{proof}

\subsection{Proving the Main Theorems}\label{rezults}

Now we will apply the new version of Mitchell forcing to answer Krueger's questions. We can readily prove \autoref{consecutivetheorem}, which states the can obtain $\DSS(\aleph_2) \wedge \DSS(\aleph_3)$:

\begin{proof}[Proof of \autoref{consecutivetheorem}] Begin with a ground model $V$ in which $\lambda_1<\lambda_2$ and the $\lambda$'s are Mahlo. Let $\M_1=\M^+(\omega,\aleph_1,\lambda_1)$. (Any $\lambda_1$-sized forcing that turns $\lambda_1$ into $\aleph_2$ and adds a disjoint stationary sequence on $\aleph_2$ would work, so we could also use a more standard mixed support iteration.) Then let $\dot{\M}_2$ be an $\M_1$-name for $\M^+(\omega,\lambda_1,\lambda_2)$. We argue that if $G_1$ is $\M_1$-generic over $V$ and $G_2$ is $\dot{\M}_2[G_1]$-generic over $V[G_1]$, then $V[G_1][G_2] \models ``\DSS(\lambda_1) \wedge \DSS(\lambda_2)$''. We get $\DSS(\lambda_2)$ from the fact that $\lambda_2$ remains Mahlo in $V[G_1]$ together with \autoref{wehaveit}, so we only need to argue that the disjoint stationary sequence $\vec{\mathcal{S}}:=\seq{\mathcal{S}_\alpha}{\alpha \in S} \in V[G_1]$ remains a disjoint stationary sequence in $V[G_1][G_2]$.

Working in $V[G_1]$, preservation of $\vec{\mathcal{S}}$ follows from the projection analysis: Let $H_1$ and $H_2$ be chosen so that $H_1$ is $\T:=\T(\M_2)$-generic over $V[G_1]$, $H_2$ is $\Add(\omega,\lambda_2)^{V[G_1]}$-generic over $V[G_1][H_1]$, and $V[G_1][G_2] \subseteq V[G_1][H_1][H_2]$. Since $\T$ is $\lambda_1$-closed, it preserves stationarity of $S$ and the $\mathcal{S}_\alpha$'s, and $\Add(\omega,\lambda_2)^{V[G_1]}$ still has the countable chain condition in $V[G_1][H_1]$. It follows that the stationarity of $S$ is preserved in $V[G_1][H_1][H_2]$, as well as the stationarity of the $\mathcal{S}_\alpha$'s (by \autoref{chain-pres}). Therefore $\vec{\mathcal{S}}$ is a disjoint stationary sequence on $\lambda_1$ in $V[G_1][G_2]$.\end{proof}


It will take a bit more work to show obtain \autoref{superconsecutivetheorem} holds in the same model for \autoref{consecutivetheorem}. (Recall that \autoref{superconsecutivetheorem} states that we can simultaneously separate internally stationary and internally club for $[H(\aleph_2)]^{\aleph_1}$ and $[H(\aleph_3)]^{\aleph_2}$.) Note that we cannot just apply \autoref{dss-equivalence} because $2^\omega = \aleph_3$ in the model for \autoref{consecutivetheorem}, plus it is consistent that there can be a stationary set which is internally unbounded but not internally stationary \cite{Krueger2008}.


We will give some facts on preservation of the distinction between stationary sets that are internally stationary but not internally club:

\begin{proposition}\label{long-closed-pres} Suppose $\P$ is $\nu$-closed and $S \subseteq [X]^\delta$ is a stationary set such that $|[X]^\delta| \le \nu$ and $|X|>1$. Then $\Vdash_\P ``S$ is stationary in $[X]^\delta\text{''}$.\end{proposition}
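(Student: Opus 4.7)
The plan is to use the standard characterization of stationarity in $P_\delta(X)$: a set $T \s P_\delta(X)$ contains a club iff there is a function $F \colon X^{<\omega} \arr X$ whose closure-point set $C_F \defeq \{N \in P_\delta(X) : F[N^{<\omega}] \s N\}$ is contained in $T$. Consequently, preservation of $S$ reduces to this: given a $\P$-name $\dot F$ for a function $X^{<\omega} \arr X$ and a condition $p_0 \in \P$, produce some $q \le p_0$ and some $N \in S$ with $q \fr \dot F[\check N^{<\omega}] \s \check N$. The hypothesis $|X|^{<\delta} \le \nu$ in particular gives $|X^{<\omega}| \le \nu$, so I fix an enumeration $\seq{\bar x_\alpha}{\alpha < \kappa}$ of $X^{<\omega}$ with $\kappa \le \nu$.

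The central construction then uses $\nu$-closure to build a $\le_\P$-descending sequence $\seq{q_\alpha}{\alpha < \kappa}$ below $p_0$ together with a ground-model function $g \colon X^{<\omega} \arr X$ such that $q_{\alpha+1} \fr \dot F(\bar x_\alpha) = g(\bar x_\alpha)$ for every $\alpha$. Successor stages extend the current condition to decide $\dot F(\bar x_\alpha)$; at each limit stage $\beta < \kappa \le \nu$, the $\nu$-closure of $\P$ furnishes a lower bound of the strictly-less-than-$\nu$-long preceding subsequence. With $g \in V$ in hand, $C_g$ is club in $P_\delta(X)$, so the ground-model stationarity of $S$ yields some $N \in S \cap C_g$.

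For this $N$, let $I_N \defeq \{\alpha < \kappa : \bar x_\alpha \in N^{<\omega}\}$, so $|I_N| \le |N^{<\omega}| = |N| < \delta \le \nu$. There are two cases. If $\kappa = \nu$, then regularity of $\nu$ (implicit in "$\nu$-closed") together with $|I_N| < \nu = \kappa$ gives $\sup I_N < \kappa$, and I set $q^* \defeq q_{\sup I_N + 1}$. If $\kappa < \nu$, then the whole sequence has length $\kappa < \nu$, so $\nu$-closure furnishes an extra lower bound $q^*$ of all of $\seq{q_\alpha}{\alpha < \kappa}$. In either case $q^* \le q_{\alpha+1}$ for every $\alpha \in I_N$, so $q^* \fr \dot F(\bar x) = g(\bar x)$ for every $\bar x \in N^{<\omega}$; since $N \in C_g$ places $g(\bar x) \in N$ for such $\bar x$, one obtains $q^* \fr \dot F[\check N^{<\omega}] \s \check N$, which suffices.

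The main obstacle is really just careful bookkeeping: the three cardinal inequalities $|X|^{<\delta} \le \nu$, $\delta \le \nu$, and the regularity of $\nu$ have to act in concert. The first constrains $\kappa \le \nu$ so that $\nu$-closure carries us through the entire enumeration; the second, together with $|N| < \delta$, forces $|I_N|$ strictly below $\nu$; and the third converts that cardinality bound into the supremum bound required to locate the witnessing condition $q^*$. None of the three ingredients is difficult on its own, but all three are needed simultaneously.
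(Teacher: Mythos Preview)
Your proof is correct and takes a somewhat different route from the paper's. The paper works directly with a name $\dot C$ for a club: it enumerates all of $P_\delta(X)$ (length $\le \nu$ by the hypothesis $|X|^{<\delta}\le\nu$), and along a descending chain of conditions it picks, for each $x_\xi\in P_\delta(X)$, some $z_\xi\supseteq x_\xi$ forced into $\dot C$; the ground-model club $D$ is then the closure of $\{z_\xi:\xi<\bar\nu\}$ under directed unions of size $<\delta$, and one finishes by finding $w\in S\cap D$ and a condition far enough along the chain to force $w\in\dot C$. You instead invoke the Kueker--Jech function characterization of clubs and decide the \emph{function} rather than the club: you only need to enumerate $X^{<\omega}$ (of size $|X|\le\nu$), decide $\dot F$ pointwise along a descending chain to get a ground-model $g$, and intersect $S$ with $C_g$. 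Both arguments have the same shape---build a descending $\nu$-chain that pins down enough of the generic object to produce a ground-model club---but yours is marginally lighter, since $X^{<\omega}$ is no larger (and often smaller) than $P_\delta(X)$.

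One small quibble: you say regularity of $\nu$ is ``implicit in $\nu$-closed''. That is not literally true---the definition makes sense for singular $\nu$---but the point is harmless: if $\nu$ is singular then $\nu$-closure implies $\nu^+$-closure (take a cofinal subsequence), so one may always replace $\nu$ by a regular cardinal without disturbing the hypotheses $|X|^{<\delta}\le\nu$ and $\delta\le\nu$. With that adjustment your Case~1 argument goes through, and in fact the singular case simply collapses into your Case~2.
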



\begin{proof} Let $\dot{C}$ be a $\P$-name for a club in $[X]^\delta$ and let $\vec{x}=\seq{x_\xi}{\xi \le \bar{\nu}}$ enumerate $[X]^\delta$ (where $\bar{\nu} \le \nu$). Note that we have $\delta < 2^\delta \le |X|^\delta \le \nu$, so conditions in $\P$ can decide names for elements of $\dot{C}$. We construct a sequence $\vec{z}=\seq{z_\xi}{\xi<\bar{\nu}} \subseteq [X]^\delta$ and a $\le_\P$-descending sequence $\seq{p_\xi}{\xi<\bar{\nu}}$ using the closure of $\P$ such that for all $\xi<\bar{\nu}$, $p_\xi \Vdash \textup{``}x_\xi \subseteq z_\xi \in \dot{C}\textup{''}$ and $p_\xi \| \textup{``}x_\xi \in \dot{C}\textup{''}$.

Then let $D$ be the set $\{x_\xi : \exists \zeta<\bar{\nu},p_\zeta \Vdash \textup{``}x_\xi \in \dot{C}\textup{''}\}$. We can argue that $D$ is a club: It is unbounded because of the sets chosen for $z_\xi$. It is closed because if $\seq{x_{\xi_i}}{i<\bar{\delta}} \subseteq D$ (for $\bar{\delta}\le \delta$) is an $\subseteq$-increasing sequence such that we have $p_{\zeta_i} \Vdash \textup{``}x_{\xi_i} \in \dot{C}\textup{''}$, and $\zeta^* = \sup_{i<\bar{\delta}}\zeta_i$, then $p_{\zeta^*} \Vdash \textup{``}\bigcup_{i<\bar{\delta}}x_{\xi_i} \in \dot{C}\textup{''}$.

There is some $w \in D \cap S$. If $p_\xi$ is such that $p_\xi \Vdash \textup{``}w \in \dot{C}\textup{''}$, then we have $p_\xi \Vdash \textup{``}\dot{C} \cap S \ne \emptyset\textup{''}$.\end{proof}


\begin{proposition}\label{distinction-pres} Suppose $|[H(\theta)]^\delta| \le \nu$. Let $\mathbb{F}$ have the $\delta$-chain condition and let $\mathbb{G}$ be $\nu$-closed. If there is a stationary set $S \subseteq [H(\theta)]^\delta$ consisting of sets that are internally stationary but not internally club. Then $\mathbb{F} \times \mathbb{G}$ forces that there is a stationary set consisting of sets that are internally stationary but not internally club.\end{proposition}

\begin{proof} Since $\mathbb{G}$ preserves the chain condition of $\mathbb{F}$, we show that preservation of the distinction can be achieved by forcing with $\mathbb{G}$ and then $\mathbb{F}$. The poset $\mathbb{G}$ preserves the distinction by \autoref{long-closed-pres} and the fact that it does not change $H(\theta)$.

Now we argue that $\mathbb{F}$ preserves the distinction. Let $S$ be the witnessing stationary set in $V$ and let $X = H(\theta)^V$. If $G$ is $\mathbb{F}$-generic over $V$, let $Y = H(\theta)^{V[G]}$ and let $S^*=\{M \in [Y]^\delta:M \cap X \in S\}$. We will argue that $S^*$ witnesses the relevant statement in $V[G]$. Let $\dot{S}^*$ be a name for $S^*$.

To see that $\dot{S}^*$ is forced to be stationary, let $\dot{C}$ be a name for a club in $[\dot{Y}]^\delta$. Given $p \in \mathbb{F}$, let $D = \{z:\exists \dot{w},p \Vdash \textup{``}\dot{w} \in \dot{C},\dot{w} \cap X = z\}$ is a club in $[X]^\delta$ as regarded in $V$, so there is some $z \in S$, and hence $p \Vdash \textup{``}\dot{w} \in \dot{C} \cap \dot{S}^*\textup{''}$.

Next we argue that members of $\dot{S}^*$ are forced not to be internally club. Suppose for contradiction, then, that $p$ forces $\dot{M} \in \dot{S}^*$ to be internally club as witnessed by $\dot{c}$, and also that $N = \dot{M} \cap X$ where $N \in S$. Let $d=\{z:\exists \dot{w},p \Vdash \textup{``}\dot{w} \in \dot{c},\dot{w} \cap X=z\}$. Then $d$ is a club in $P_\mu(N)$ since if $a \in z \subseteq N$ then $a \in N$ and if $p \Vdash \textup{``}\dot{w} \cap X = z\textup{''}$ then in particular $p \Vdash \textup{``}\dot{w} \in \dot{M}\textup{''}$, so $z \in N$. This contradicts the fact that $N$ consists of sets that are not internally club.

Finally, we argue that $\dot{S}^*$ is forced to be internally stationary. Let $\dot{M}$ be forced by $p$ to be in $\dot{S}^*$ and that $N=\dot{M} \cap X$. Let $G$ be generic with $p \in G$ and work in $V[G]$. Then $\{w \subseteq P_\delta(M):\exists z \in N,w = z[G]\}$ is a club as regarded in $V[G]$. As in the argument for stationarity, any name $\dot{c}$ for a club in $P_\delta(\dot{M})$ can produce a corresponding club $d$ in the ground model. Then we can find some $z \in N \cap d$ and if $G$ is generic with $p \in G$ then $z[G] \in c \cap M$.\end{proof}



We use a concept from Harrington and Shelah to handle Mahlo cardinals \cite{Harrington-Shelah1985}:

\begin{definition} Let $\lambda$ be Mahlo and let $\mathcal{N}$ be a model of some fragment of $\ZFC$. We say that $\mathcal{M} \prec \mathcal{N}$ is \emph{rich} if the following hold: 

\begin{enumerate}
\item $\lambda \in \mathcal{M}$;
\item $\bar{\lambda}:=\mathcal{M} \cap \lambda \in \lambda$;
\item $\bar \lambda$ is an inaccessible cardinal in $\mathcal{N}$;
\item The size of $\mathcal{M}$ is $\bar{\lambda}$;
\item $\mathcal{M}$ is closed under $<\!\bar{\lambda}$-sequences and $\bar{\lambda}<\lambda$.
\end{enumerate}
\end{definition}

\begin{lemma}\label{distinctionlemma} If $\lambda$ is Mahlo, then $\M^+(\omega,\mu,\lambda)$ forces that there are stationarily many $Z \in [\mu^+]^\mu$ which are internally stationary but not internally club.\end{lemma}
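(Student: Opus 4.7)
The plan is to exploit Mahloness of $\lambda$ to obtain stationarily many rich elementary submodels $\mathcal{M} \prec H(\Theta)^V$ with $\bar\lambda := \mathcal{M} \cap \lambda$ inaccessible, $|\mathcal{M}|^V = \bar\lambda$, and $\mathcal{M}$ closed under ${<}\bar\lambda$-sequences. For each such $\mathcal{M}$, I would apply \autoref{mainlemma} at $\delta_0 = \bar\lambda$ to factor $\M^+(\omega,\mu,\lambda) \simeq \M^+(\omega,\mu,\bar\lambda) \ast \Add(\omega) \ast \mathcal{Q}$, let $\bar G$ and $r$ be the induced generics, and define $N_\mathcal{M} := \mathcal{M}[\bar G][r]$ viewed in $V[G]$. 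Since $\bar\lambda$ is collapsed to $\mu$, $N_\mathcal{M}$ has size $\mu$ in $V[G]$. The stationarity of the family $\{N_\mathcal{M}\}$ in $[H(\mu^+)^{V[G]}]^\mu$ follows from the Mahloness-provided stationarity of rich models combined with preservation through the projection.

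For internal stationarity, richness gives $P_\mu^V(\mathcal{M}) \subseteq \mathcal{M} \subseteq N_\mathcal{M}$, and since the forcing preserves cardinalities below $\mu$, $P_\mu^V(\mathcal{M}) \subseteq P_\mu^{V[G]}(N_\mathcal{M}) \cap N_\mathcal{M}$. I would argue that $P_\mu^V(\mathcal{M})$ is stationary in $P_\mu^{V[G]}(N_\mathcal{M})$ by combining the projection of $\M^+(\omega,\mu,\lambda)$ through $\Add(\omega,\lambda) \times \T$ (with $\T$ being $\mu$-closed, hence not adding new ${<}\mu$-subsets) with the $\omega_1$-chain condition and $\mu$-covering of the Cohen factor, invoking \autoref{closed-pres} and \autoref{chain-pres}.

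For failure of internal clubness, I would apply \autoref{kruegergitik} to the pair $V[\bar G][r] \subset V[G]$: the quotient $\mathcal{Q}$ adds reals via the $\Add(\omega,\lambda)$-factor of $\mathcal{R}$, so the fact yields stationarily many $N \in P_\mu^{V[G]}(H(\Theta)) \cap \mathcal{IA}(\omega)$ with $N \cap X \notin V[\bar G][r]$ for $X \supseteq \lambda$. Projecting by $N \mapsto N \cap N_\mathcal{M}$ is a continuous surjection onto $P_\mu^{V[G]}(N_\mathcal{M})$ and therefore preserves stationarity; a refinement, using that $N_\mathcal{M} \cap \lambda$ is essentially $\bar\lambda$ so that a non-ground initial segment of $N \cap \lambda$ lying in $\bar\lambda$ transfers to a non-ground portion of $N \cap N_\mathcal{M}$, ensures the projected stationary set consists of $x \notin V[\bar G][r]$, and since $N_\mathcal{M} \subseteq V[\bar G][r]$, such $x$ cannot be elements of $N_\mathcal{M}$. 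Hence $P_\mu^{V[G]}(N_\mathcal{M}) \cap N_\mathcal{M}$ is not club.

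The main obstacle is the preservation argument for internal stationarity: the standard lemmas \autoref{closed-pres} and \autoref{chain-pres} are most naturally phrased with a $V$-ambient $P_\mu^V(X)$, whereas here we need stationarity with respect to the $V[G]$-ambient $P_\mu^{V[G]}(N_\mathcal{M})$. Bridging this gap will require combining $\mu$-closure of $\T$ (no new ${<}\mu$-subsets appear after $\T$) with $\mu$-covering from the $\omega_1$-cc Cohen factor, to show that every $V[G]$-club in $P_\mu^{V[G]}(N_\mathcal{M})$ admits a countable interleaving chain of $V$-covers whose limit lies in both the club and $P_\mu^V(\mathcal{M})$.
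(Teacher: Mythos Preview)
Your proposal has the right large-scale architecture (rich models, the factorization from \autoref{mainlemma}, and \autoref{kruegergitik}), but the choice of witness $N_{\mathcal M}:=\mathcal M[\bar G][r]$ is a genuine error that breaks both the stationarity of the family and the not-internally-club argument.

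First, $\mathcal M[\bar G][r]$ is not a standard object: $\mathcal M$ carries $\M$-names, and $\bar G\ast r$ is only generic for an initial segment of $\M$, so there is no canonical interpretation. More importantly, under any reasonable reading your $N_{\mathcal M}$ lies entirely inside $V[\bar G][r]$, and this kills stationarity of the family in $[H(\mu^+)^{V[G]}]^\mu$. Given a club $C$ in $V[G]$ generated by a function $F=\dot F[G]$, closure of $N_{\mathcal M}$ under $F$ would require $N_{\mathcal M}$ to ``see'' $F$; but $F$ is computed from the full generic $G$, whereas $N_{\mathcal M}$ only has access to $\bar G$ and $r$. The paper instead takes $Z:=\mathcal M[G]\cap H(\lambda)[G]$, which \emph{is} elementary in $H(\lambda)^{V[G]}$ and is therefore closed under $F$ by the usual antichain argument.

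Second, your application of \autoref{kruegergitik} to the pair $V[\bar G][r]\subset V[G]$ cannot be made to land in $P_\mu(N_{\mathcal M})$: the hypothesis demands $(\mu^+)^{V[G]}=\lambda\subseteq X$, but $|N_{\mathcal M}|=\bar\lambda<\lambda$, so $X=N_{\mathcal M}$ is ruled out. Your ``refinement'' via $N\cap\bar\lambda$ does not follow, since $N\cap\lambda\notin V[\bar G][r]$ says nothing about $N\cap\bar\lambda$ --- the newness could live entirely above $\bar\lambda$. The paper avoids this by working on the other side of the Cohen real: it transfers $Z$ to its Mostowski collapse $h$, notes that $h^{<\bar\lambda}\subseteq h$, and applies \autoref{kruegergitik} to the pair $V[\bar G]\subset V[\bar G][r]$ with $X=h$; here $(\mu^+)^{V[\bar G][r]}=\bar\lambda\subseteq h$ holds. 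A putative internal club in $Z$ collapses to one in $h$, whose members must lie in $V[\bar G]$ by closure of $h$, contradicting the stationary set of $A$ with $A\cap h\notin V[\bar G]$.

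In short, the witnesses must be $\mathcal M[G]$-type objects (so the family is stationary), and the costationarity argument must be run across the single Cohen real at coordinate $\bar\lambda$, with the Mostowski collapse mediating between $Z$ and a set $h$ small enough to satisfy the hypotheses of \autoref{kruegergitik}.
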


This follows Krueger's proof of \cite[Theorem 10.1]{Krueger2009}, making necessary changes for Mahlo cardinals, and including enough details to show that we can get the necessary preservation of stationarity simply from the projection analysis. We do not need guessing functions (which are used in Krueger's argument) because we are only obtaining one instance of separation per large cardinal.

\begin{proof}[Proof of \autoref{distinctionlemma}] Denote $\M:=\M^+(\omega,\mu,\lambda)$ and let $\dot{C}$ be an $\M$-name for a club in $([H(\mu^+)]^\mu)^{V[\M]}$. We want to find an $\M$-name $\dot{Z}$ for an element of $([H(\mu^+)]^\mu)^{V[\M]} \cap \dot{C}$ that is internally stationary but not internally club. Let $\dot{F}$ be an $\M$-name for a function $(H(\mu^+)^{V[\M]})^{<\omega} \to H(\mu^+)^{V[\M]}$ with the property that all of its closure points are in $\dot{C}$. Let $\Theta$ be as large as needed for the following discussion and let $\mathcal{N}$ be the structure $(H(\Theta),\in,<_\Theta,\M,\dot{F},\lambda,\mu)$ where $<_\Theta$ is a well-ordering of $H(\Theta)$.

Since $\lambda$ is Mahlo, we can find some $\mathcal{K} \prec \mathcal{N}$ with $\mu \subset \mathcal{K}$ that is a rich submodel of cardinality $\bar{\lambda}$. Now set $G$ to be $\M$-generic over $V$. Note that $H(\lambda)^{V[G]} = H(\lambda)[G]$ because $\M$ has the $\lambda$-chain condition and $\M \subset H(\lambda)$. We will argue that $Z:=\mathcal{K}[G] \cap H(\lambda)[G]$ is what we are looking for.

\begin{claim} $Z \in C:=\dot{C}[G]$.\end{claim}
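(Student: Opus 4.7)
The plan is to show that $Z := \mathcal{M}[G] \cap H(\lambda)[G]$ is a closure point of $F := \dot{F}[G]$; since every closure point of $F$ lies in $C$ by the choice of $\dot{F}$, this gives $Z \in C$, provided we first verify that $Z \in [H(\mu^+)^{V[G]}]^{\mu}$. For the latter, $Z \subseteq H(\lambda)[G] = H(\lambda)^{V[G]} = H(\mu^+)^{V[G]}$ by the $\lambda$-chain condition of $\M$ (already noted in the setup) together with the fact that $\mu^+ = \lambda$ in $V[G]$. The size bound $|Z|^{V[G]} = \mu$ follows from $\mu \subseteq \mathcal{M} \subseteq Z$ on the one hand and $|\mathcal{M}|^V = \bar\lambda$ on the other, combined with the fact that $\M$ projects onto $\Col(\mu,\bar\lambda)$ and therefore collapses $\bar\lambda$ to $\mu$ in $V[G]$.

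For the closure-point verification, the parameters $\dot{F}, \M, \mu, \mu^+$ are all elements of $\mathcal{N}$, hence of $\mathcal{M}$ by richness, so $F \in \mathcal{M}[G]$ and the range of $F$ is contained in $H(\mu^+)^{V[G]} = H(\lambda)[G]$. Given any finite tuple $\vec{z} \in Z^{<\omega}$, the tuple itself lies in $\mathcal{M}[G]$ (it is a finite set built from elements of $\mathcal{M}[G]$ by pairing, and $\mathcal{M}[G]$ is closed under pairing by elementarity). Applying $F$, we get $F(\vec{z}) \in \mathcal{M}[G]$, and simultaneously $F(\vec{z}) \in H(\lambda)[G]$, so $F(\vec{z}) \in \mathcal{M}[G] \cap H(\lambda)[G] = Z$. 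Hence $Z$ is closed under $F$ and therefore belongs to $C$.

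The main obstacle here is really just bookkeeping: tracking the identifications $H(\mu^+)^{V[G]} = H(\lambda)^{V[G]} = H(\lambda)[G]$ and confirming membership of the relevant parameters in $\mathcal{M}$ so that $F \in \mathcal{M}[G]$. The genuine difficulty of the overall lemma will lie in the two subsequent claims, where one must use the richness of $\mathcal{M}$ (closure under ${<}\bar\lambda$-sequences) and the projection analysis of \autoref{mainlemma} to show that $Z$ is internally stationary but not internally club; the present claim merely sets up $Z$ as a candidate member of $C$.
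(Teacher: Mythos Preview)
Your proposal is correct and follows essentially the same strategy as the paper: verify $|Z|=\mu$ and show $Z$ is closed under $F$. The paper, however, does not invoke the blanket statement $\mathcal{M}[G]\prec\mathcal{N}[G]$; instead it unpacks exactly the instance needed. Given $a_1,\dots,a_n\in Z$ with names $\dot b_1,\dots,\dot b_n\in\mathcal{M}\cap H(\lambda)$, it uses the $\lambda$-chain condition of $\M$ together with $\mathcal{M}\cap\lambda=\bar\lambda$ to conclude that the $<_\Theta$-least maximal antichain deciding $\dot F(\dot b_1,\dots,\dot b_n)$ is a \emph{subset} of $\mathcal{M}$, so the deciding condition in $G$ already lies in $\mathcal{M}$ and the decided value $\dot b_*$ lands in $\mathcal{M}\cap H(\lambda)$. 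Your one-line appeal to ``elementarity of $\mathcal{M}[G]$'' hides precisely this antichain argument; it is worth making explicit, since this is where richness (specifically $\mathcal{M}\cap\lambda\in\lambda$) and the chain condition are actually used.

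Two small corrections. First, $\dot F\in\mathcal{M}$ not ``by richness'' but because $\dot F$ is a constant of the structure $\mathcal{N}$ and $\mathcal{M}\prec\mathcal{N}$. Second, $\mathcal{M}\subseteq Z$ is false in general (elements of $\mathcal{M}$ need not lie in $H(\lambda)$); what you want for the lower bound is $\bar\lambda=\mathcal{M}\cap\lambda\subseteq Z$, and for the upper bound $|Z|\le|\mathcal{M}[G]|\le|\mathcal{M}|=\bar\lambda$, which has cardinality $\mu$ in $V[G]$.
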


\begin{proof} We have $\bar \lambda \le |Z| \le |\mathcal{K}| \le \bar \lambda$ and $\bar \lambda$ has cardinality $\mu$ in $\mathcal{N}[G]$, so $Z \in [H(\lambda)^{V[G]}]^\mu$. If $a_1,\ldots,a_n \in Z$, there are $\M$-names $\dot{b}_1,\ldots,\dot{b}_n \in \mathcal{K} \cap H(\lambda)$ such that $a_i=\dot{b}_i[G]$ for all $1 \le i \le n$. By elementarity, $\mathcal{K}$ contains the $<_\Theta$-least maximal antichain $A \subset \M$ of conditions deciding $\dot{F}(\dot{b}_1,\ldots,\dot{b}_n)$. Since $|A|<\lambda$, $|A| \in \mathcal{K} \cap \lambda = \bar \lambda$, so it will follow that $A \subset \mathcal{K}$. Therefore if $p \in G \cap A$, then $p \in M$ in particular, so $p \Vdash \dot{F}(\dot{b}_1,\ldots,\dot{b}_n) = \dot{b}_\ast$ for some $\dot{b}_\ast \in \mathcal{K} \cap H(\lambda)$ where we automatically get $\dot{b}_\ast \in H(\bar \lambda)$, and therefore $F(a_1,\ldots,a_n) = a_\ast := \dot{b}_\ast[G] \in \mathcal{K}[G] \cap H(\lambda)[G] = Z$ (where of course $F:=\dot{F}[G]$).\end{proof}

For the rest of the proof let $\bar G:= \pi_\mathcal{K}(G)$ where $\pi_{\mathcal{K}}$ is the Mostowski collapse relative to $\mathcal{K}$. Since $\pi_\mathcal{K}(\M)=\M^+(\omega,\mu,\bar \lambda)$, there is an extension $\pi_{\mathcal{K}}:\mathcal{K}[G] \cong \pi_{\mathcal{K}}(\mathcal{K})[\bar G]$. We also denote $h:=\pi_{\mathcal{K}}(H(\lambda)[G] \cap \mathcal{K}[G])$.


\begin{claim} $Z$ is internally stationary.\end{claim}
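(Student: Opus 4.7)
My plan is to produce a stationary subset of $P_\mu(Z) \cap Z$ in $V[G]$ by first locating a stationary set in $V[\bar{G}]$, preserving its stationarity through the quotient $\M / \bar{G}$, and pulling the result back through the Mostowski collapse $\pi_\mathcal{M}$.

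Start by observing that $h^{<\bar{\lambda}} \subseteq h$ in $V[\bar{G}]$ together with $\mu < \bar{\lambda}$ implies $P_\mu(h)^{V[\bar{G}]} \subseteq h$. Choose $S \in V[\bar{G}]$ to be a $V[\bar{G}]$-stationary subset of $P_\mu(h)^{V[\bar{G}]} \cap \mathcal{IA}(\omega)$, which is standard to construct (continuous $\omega$-chains of elementary submodels of $h$). By the closure, $S \subseteq h$ automatically.

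To transfer $S$'s stationarity to $V[G]$, apply \autoref{mainlemma} at $\delta_0 = \bar{\lambda}$ (inaccessible in $V$ by richness), factoring $\M / \bar{G}$ as a projection of $\Add(\omega) \ast (\mathcal{P} \times \mathcal{R})$, which in turn is a projection of a product of a $\mu$-closed forcing with an $\omega^+$-cc forcing. In the full product extension $W' \supseteq V[G]$, \autoref{closed-pres} (applicable since $S \subseteq \mathcal{IA}(\omega)$ with $\omega<\mu$) and \autoref{chain-pres} together give that $S$ remains stationary in $P_\mu(h)^{W'}$ in $W'$. For the descent to $V[G]$, use a standard closure-point argument: every $V[G]$-club in $P_\mu(h)^{V[G]}$ contains a sub-club of the form $\{x : F"x^{<\omega} \subseteq x\}$ for some $F: h^{<\omega} \to h$ in $V[G]$, the $W'$-closure points of $F$ form a $W'$-club meeting $S$, and any witness $s \in V[\bar{G}] \subseteq V[G]$ is a closure point of $F$ computed in $V[G]$ as well, so $s$ lies in the original $V[G]$-club.

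Finally, in $V[G]$ the collapse $\pi_\mathcal{M}$ extends to a bijection between $Z$ and $h$ which, applied elementwise, gives a club-preserving bijection between $P_\mu(Z)^{V[G]}$ and $P_\mu(h)^{V[G]}$. Hence $\pi_\mathcal{M}^{-1}(S) \subseteq P_\mu(Z) \cap Z$ is $V[G]$-stationary in $P_\mu(Z)^{V[G]}$, showing that $Z$ is internally stationary. The main obstacle is the preservation step: carefully tracking how $P_\mu(h)$ enlarges between $V[\bar{G}]$, $V[G]$, and the auxiliary extension $W'$, which \autoref{mainlemma} together with the closure-point descent argument handles.
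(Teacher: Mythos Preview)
Your proposal is correct and follows essentially the same strategy as the paper: locate an $\mathcal{IA}(\omega)$ stationary subset of $P_\mu(h)$ in $V[\bar G]$, push its stationarity through the quotient $\M/\bar G$ using \autoref{mainlemma}, and pull back along $\pi_{\mathcal{M}}^{-1}$ to witness internal stationarity of $Z$.

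There is one small difference in the preservation step worth flagging. You assert that the full quotient $\Add(\omega)\ast(\mathcal{P}\times\mathcal{R})$ is a projection of a single product of a $\mu$-closed forcing with an $\omega_1$-cc forcing over $V[\bar G]$. This is true, but it is not literally the statement of \autoref{mainlemma}, which only says that $\mathcal{P}\times\mathcal{R}$ is such a projection \emph{after} forcing with $\Add(\omega)$. To get your stronger formulation you need one more termspace step (take the $\Add(\omega)$-termspace of the closed component), which you should state. The paper instead works directly with the two-step form $\A_1\ast(\dot{\T}\times\A_2)$ and therefore has to insert an extra argument verifying that the $\mathcal{IA}(\omega)$ property of the stationary set survives the initial ccc step $\A_1$ so that \autoref{closed-pres} still applies at the $\dot{\T}$ stage. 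Your route, once justified, sidesteps that wrinkle by applying the closed forcing first; the paper's route stays closer to the lemma as stated but pays for it with the $\mathcal{IA}$-lifting paragraph. Both are fine.
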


\begin{proof} First, we argue that $S:=P_\mu(h)^{\mathcal{N}[\bar G]}$ is stationary as a subset of $P_\mu(h)^{\mathcal{N}[G]}$ in $\mathcal{N}[G]$. By \autoref{mainlemma}, the quotient $\M/\bar{G}$ is a projection of a forcing of the form $\A_1 \ast (\dot{\T} \times \A_2)$ where $\A_1$ has the countable chain condition, $\dot{\T}$ is an $\A_1$-name for a $\mu$-closed forcing, and $\A_2$ also has the countable chain condition. Let $K_1$, $K_T$, and $K_2$ be respective generics such that $V[G] \subseteq V[\bar G][K_1][K_T][K_2]$. Working in $\mathcal{N}[\bar G]$, note that $S':= S \cap \mathcal{IA}(\omega)$ is stationary, and therefore has its stationarity preserved in $V[\bar G][K_1]$ by \autoref{chain-pres}.

We must also show that the stationarity of $S'$ will be preserved by countably closed forcings over $\mathcal{N}[\bar G][K_1]$. Suppose $\seq{M_n}{n<\omega}$ witnesses internal approachability of some $N \in S'$ in $V[\bar G]$ with respect to the structure $H(\lambda^+)^{V[\bar G]}$, and let $M_\omega :=\bigcup_{n<\omega}M_n$. Then we can see that $\seq{M_n[K_1]}{n<\omega}$ is a chain of elementary submodels of $H(\lambda)[\bar G][K_1] = H(\lambda)^{V[\bar G][K_1]}$. We also have $M_n[K_1] \cap V[\bar G]=M$ and $M_\omega[K_1] \cap V[\bar G] = M_\omega \in S'$ with $M_\omega[K_1] \prec H(\lambda)^{V[\bar G][K_1]}$. If we choose the $M_n$'s to be elementary substructures of $H(\lambda^+)^{V[\bar G]}(\in,<^*,\dot C,\ldots)$ where $<^*$ is a well-ordering and $\dot C$ is a $\A_1 \ast \dot{\T}$-name for a club, then an argument almost exactly like the one showing that internal approachability is preserved (i.e$.$ the proof of \autoref{closed-pres}) will show that $S'$ is stationary in $\mathcal{N}[\bar G][K_1][K_T]$.

Then the extension of $\mathcal{N}[\bar G][K_1][K_T][K_2]$ over $\mathcal{N}[\bar G][K_1][K_T]$ preserves the stationarity of $S'$ by another application of \autoref{chain-pres}, so we get stationarity in $\mathcal{N}[G]$.


Now that we have established preservation of stationarity of $S'$, we can finish the argument. Since $|h|=\mu$ in $\mathcal{N}[G]$, we can write $h=\bigcup_{i<\mu}x_i$ where $\seq{x_i}{i<\mu}$ is a continuous and $\subset$-increasing chain of elements of $P_\mu(h)$. (This is \emph{not} a chain through $P_\mu(h)^{\mathcal{N}[\bar{G}]}$.) The chain is a club in $P_\mu(h)^{\mathcal{N}[G]}$, in which $S'$ is stationary, so there is a stationary $X \subseteq \mu$ such that $\{x_i:i \in X\} \subseteq S'$. Since $S' \subseteq S$, it follows that $i \in X$ implies that $x_i = \pi_{\mathcal{K}}(y_i)$ for some $y_i \in Z$. Therefore $\seq{y_i}{i<\mu}$ is $\subset$-increasing with union $Z$, and in particular $\seq{y_i}{i \in X}$ is stationary in $Z$.\end{proof}

\begin{claim} $Z$ is \emph{not} internally club.\end{claim}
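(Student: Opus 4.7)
The plan is to use the idea behind \autoref{mainlemma} once more: factor $\M$ at $\bar\lambda$ to isolate an $\Add(\omega)$ stage over $V[\bar G]$, apply \autoref{kruegergitik} to produce a stationary family of small models whose trace on $h$ lies outside $V[\bar G]$ (hence outside $h$), preserve this stationarity through the remaining quotient, and then pull the result back to $Z$ via the Mostowski collapse. This will yield a stationary $S_1\subseteq P_\mu(Z)$ disjoint from $Z$, witnessing that $P_\mu(Z)\cap Z$ is not club in $P_\mu(Z)$.

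Since $\bar\lambda<\lambda$ is inaccessible in $V$, \autoref{mainlemma} gives the factorization $\M\simeq \M^+(\omega,\mu,\bar\lambda)\ast\Add(\omega)\ast(\mathcal{P}\times\mathcal{R})$ with $\mathcal{P}$ being $\mu$-closed and $\mathcal{R}$ a projection of a product of a $\mu$-closed forcing and a ccc forcing, in the extension by the first two factors. Let $r$ be the induced $\Add(\omega)$-generic over $V[\bar G]$. Because the collapse fixes ordinals below $\bar\lambda$, we have $h\in V[\bar G]$, $\bar\lambda\subseteq h$, and $(\mu^+)^{V[\bar G][r]}=\bar\lambda$. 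Fixing a regular $\Theta$ with $h\in H(\Theta)^{V[\bar G][r]}$ and invoking \autoref{kruegergitik} with $X:=h$ produces the stationary set
\[
T:=\{N\in P_\mu(H(\Theta))^{V[\bar G][r]}\cap\mathcal{IA}(\omega):N\cap h\notin V[\bar G]\}
\]
in $V[\bar G][r]$.

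Following the preservation argument of the previous claim, the $\mu$-closed components of $\mathcal{P}\times\mathcal{R}$ preserve the stationarity of the internally approachable $T$ via \autoref{closed-pres}, and the ccc components do so via \autoref{chain-pres}; hence $T$ remains stationary in $V[G]$. The trace set $S_0:=\{N\cap h:N\in T\}$ is then stationary in $P_\mu(h)$ in $V[G]$, and $S_0\cap h=\emptyset$ since each $N\cap h$ lies outside $V[\bar G]\supseteq h$. The restriction $\pi_{\mathcal{M}}\!\rest\! Z$ is a bijection $Z\to h$ in $V[G]$, which induces a bijection $N\mapsto\pi_{\mathcal{M}}[N]$ from $P_\mu(Z)$ to $P_\mu(h)$ that sends clubs to clubs. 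Moreover, if $\pi_{\mathcal{M}}^{-1}[M]\in Z$ for some $M\in S_0$, then $M=\pi_{\mathcal{M}}(\pi_{\mathcal{M}}^{-1}[M])\in\pi_{\mathcal{M}}[Z]=h$, contradicting $S_0\cap h=\emptyset$. Therefore $S_1:=\{\pi_{\mathcal{M}}^{-1}[M]:M\in S_0\}$ is stationary in $P_\mu(Z)$ and disjoint from $Z$, as desired.

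The main obstacle is the preservation of stationarity of $T$ through the $\mu$-closed components of the quotient $\mathcal{P}\times\mathcal{R}$ while keeping track of the trace $N\cap h$. I expect to handle this exactly as in the previous claim, exploiting the internal approachability of length $\omega<\mu$ of the members of $T$ and, if convenient, arranging that $h$ lies in the first term of each witnessing chain so that traces inherit internal approachability and \autoref{closed-pres} applies cleanly.
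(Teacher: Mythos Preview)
Your proposal is correct and follows essentially the same route as the paper: both isolate the $\Add(\omega)$-generic $r$ at coordinate $\bar\lambda$, apply \autoref{kruegergitik} to obtain a stationary family of $N$ with $N\cap h\notin V[\bar G]$, preserve it through the quotient via \autoref{mainlemma} together with \autoref{closed-pres} and \autoref{chain-pres}, and transfer along $\pi_{\mathcal M}$. The only difference is cosmetic: the paper phrases the final step as a contradiction (a hypothetical continuous club chain in $Z$ transfers to one $\langle W_i\rangle$ in $h$, which must meet the stationary trace set, yet $h^{<\bar\lambda}\subset h\subset V[\bar G]$ forces each $W_i\in V[\bar G]$), whereas you directly exhibit a stationary $S_1\subset P_\mu(Z)$ disjoint from $Z$.
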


\begin{proof} Suppose for contradiction that $Z$ is internally club and hence that there is a $\subset$-increasing and continuous chain $\seq{Z_i}{i<\mu} \in \mathcal{N}[G]$ with $|Z_i|<\mu$ for all $i<\mu$ and $\bigcup_{i<\mu}Z_i = Z$. So for all $i<\mu$, $Z_i \subset Z$, and so $\seq{\pi_{\mathcal{K}}[Z_i]}{i<\mu}$ is an $\subset$-increasing and continuous chain with union $h$. If we let $W_i:=\pi_\mathcal{K}[Z_i]$ for all $i<\mu$, then the fact that $|W_i|<\mu$ implies that $W_i=\pi_\mathcal{K}(Z_i) \in \mathcal{K}[\bar{G}]$. Therefore $\seq{W_i}{i<\mu}$ is a continuous and $\subset$-increasing chain of sets in $P_\mu(h)$ with union $h$.

Next we define a set $U \in \mathcal{N}[\bar G][r]$ (where $r$ is the generic induced by $G$ from $\pi_\Add^{\{\bar \lambda\}}$) as
\[
\{A \in P_\mu(H(\chi)) \cap \mathcal{IA}(\omega):A \cap h \notin \mathcal{N}[\bar G]\}.
\]
We have a real in $\mathcal{N}[\bar G][r] \setminus \mathcal{N}[\bar G]$ and $(\mu^+)^{\mathcal{N}[\bar G][r]} = \lambda \subset H(\lambda)$. Hence we apply \autoref{kruegergitik} to see that $U$ is stationary in $\mathcal{N}[\bar G][r]$, and it remains stationary in $\mathcal{N}[G]$ by the preservation properties of the quotient (i.e$.$ \autoref{mainlemma} combined with \autoref{closed-pres} and \autoref{chain-pres}). Therefore in $\mathcal{N}[G]$, since $h \subseteq H(\chi)^{\mathcal{N}[\bar{G}][r]}$, $\{A \cap h:A \in U\}$ is stationary in $P_\mu(h)$. Since $\seq{W_i}{i<\mu}$ is club in $h$, there is some $i<\mu$ such that $W_i = A \cap h$ for some $A \in U$. But by definition, $A \cap h \notin \mathcal{N}[\bar G]$, but $W_i \in \mathcal{K}[\bar{G}] \subset \mathcal{N}[\bar{G}]$, so this is a contradiction.\end{proof}

This completes the proof of the lemma.\end{proof}

\begin{proof}[Proof of \autoref{superconsecutivetheorem}] Let $\M_1$ be any $\lambda_1$-sized forcing that turns $\lambda_1$ into $\aleph_2$ and adds stationarily many $N \in [H(\aleph_2)]^{\aleph_1}$ that are internally stationary but not internally club. Let $\dot{\M}_2$ be an $\M_1$-name for $\M^+(\omega,\lambda_1,\lambda_2)$, let $G_1$ be $\M_1$-generic over $V$, and let $G_2$ be $\dot{\M}_2[G_1]$-generic over $V[G_1]$. Then we can see that the theorem holds in $V[G_1][G_2]$: the distinction between internally stationary and internally club on $ [H(\aleph_2)]^{\aleph_1}$ is preserved in $V[G_1][G_2]$ by \autoref{distinction-pres}, and we get a distinction between internally stationary and internally club for $[H(\aleph_3)]^{\aleph_2}$ by \autoref{distinctionlemma}.\end{proof}

\section{A Club Forcing and a Guessing Sequence}\label{sec-thm3}

\subsection{A review of the tools}


The main idea of the proof of \autoref{secondarytheorem} is to force a club through the complement of a canonical stationary set---that is, it is canonical in the sense that it is independent of a particular enumeration used to define it. This set is described as follows:

\begin{fact}[Krueger,\cite{Krueger2009}]\label{canonical-ds} Suppose $\mu$ is an uncountable regular cardinal and $\mu^{<\mu} \le \mu^+$. Let $\underline x = \seq{x_\alpha}{\alpha<\mu^+}$ enumerate $[\mu^+]^{<\mu}$ and let
\[
S(\underline x):= \{\alpha \in \mu^+ \cap \cof(\mu): P_\mu(\alpha) \setminus \seq{x_\beta}{\beta<\alpha} \text{ is stationary}\}.
\]
Then $\DSS(\mu^+)$ holds if and only if $S(\underline x)$ is stationary.\end{fact}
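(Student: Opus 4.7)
The plan is to prove the two directions of the equivalence separately. The backward direction ($\DSS(\mu^+)$ implies $S(\underline x)$ stationary) is a short Fodor argument; the forward direction requires a more delicate thinning of the candidate sets.

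For the backward direction, let $\seq{\mathcal{T}_\alpha}{\alpha \in S}$ be a disjoint stationary sequence on $\mu^+$ and let $C \subseteq \mu^+$ be an arbitrary club. Let $D$ be the club of $\alpha < \mu^+$ such that $x_\beta \subseteq \alpha$ for every $\beta < \alpha$; this is indeed a club, since each $x_\beta$ has size $< \mu$ and hence is bounded in $\mu^+$. Supposing toward contradiction that $C \cap S(\underline x) = \emptyset$, for each $\alpha \in S \cap C \cap D$ the set $P_\mu(\alpha) \setminus \{x_\beta : \beta < \alpha\}$ is nonstationary, so its complement in $P_\mu(\alpha)$ contains a club and therefore meets the stationary $\mathcal{T}_\alpha$. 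Pick some $\beta_\alpha < \alpha$ with $x_{\beta_\alpha} \in \mathcal{T}_\alpha$. The pairwise disjointness of $\seq{\mathcal{T}_\alpha}{\alpha \in S}$ forces $\alpha \mapsto \beta_\alpha$ to be injective, while it is regressive on the stationary set $S \cap C \cap D$; this contradicts Fodor's lemma.

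For the forward direction, suppose $S(\underline x)$ is stationary, and set $T_\alpha := P_\mu(\alpha) \setminus \{x_\beta : \beta < \alpha\}$ for $\alpha \in S(\underline x)$. For each $N = x_\gamma$, the set of $\alpha \in S(\underline x)$ with $N \in T_\alpha$ is exactly $S(\underline x) \cap (\sup N, \gamma]$. The natural first attempt is to assign each $N$ to the least such $\alpha$, which gives
\[
\mathcal{S}_\alpha := T_\alpha \cap \{N \in P_\mu(\alpha) : \sup N \geq \sup(S(\underline x) \cap \alpha)\}.
\]
These sets are pairwise disjoint by construction, and whenever $\sup(S(\underline x) \cap \alpha) < \alpha$ the second factor is a club in $P_\mu(\alpha)$, so $\mathcal{S}_\alpha$ is stationary.

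The main obstacle is that applying Fodor to the regressive function $\alpha \mapsto \sup(S(\underline x) \cap \alpha)$ shows that $\{\alpha \in S(\underline x) : \sup(S(\underline x) \cap \alpha) < \alpha\}$ is nonstationary, so this naive assignment yields $\mathcal{S}_\alpha = \emptyset$ on a stationary set. To repair this, for each $\alpha$ at which $S(\underline x)$ accumulates one should fix (using choice) a strictly increasing continuous cofinal sequence $\seq{\alpha(i)}{i < \mu}$ in $S(\underline x) \cap \alpha$ and refine the assignment, for example by using the least $i < \mu$ with $\sup N < \alpha(i)$ as a secondary address. The heart of the argument is then to verify, using the stationarity of the $T_\alpha$'s, the structure of the enumeration $\underline x$, and the hypothesis $\mu^{<\mu} \leq \mu^+$, that the refined $\mathcal{S}_\alpha$'s remain pairwise disjoint and each is stationary in $P_\mu(\alpha)$.
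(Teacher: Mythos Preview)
This statement is quoted in the paper as a \emph{fact} from Krueger's earlier work; the paper does not supply its own proof, so there is nothing to compare your argument against. That said, let me comment on the proposal itself.

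Your argument that $\DSS(\mu^+)$ implies $S(\underline x)$ is stationary is correct: the Fodor contradiction via the regressive injective map $\alpha \mapsto \beta_\alpha$ works exactly as you describe.

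The converse, however, is not proved. You correctly observe that the naive assignment $\mathcal{S}_\alpha = T_\alpha \cap \{N : \sup N \ge \sup(S(\underline x)\cap\alpha)\}$ yields disjoint sets but is empty precisely on the stationary set of $\alpha$ that are limit points of $S(\underline x)$; this is a genuine obstacle. But your proposed repair is only a gesture: fixing a cofinal $\mu$-sequence $\seq{\alpha(i)}{i<\mu}$ through $S(\underline x)\cap\alpha$ and using ``the least $i$ with $\sup N < \alpha(i)$'' as a secondary address does not by itself define new sets $\mathcal{S}_\alpha$, and you give no indication of how such an address---which is internal to a single $\alpha$---would enforce disjointness \emph{across} different $\alpha$'s, nor why the refined sets remain stationary. (Note also that a \emph{continuous} cofinal sequence inside $S(\underline x)\cap\alpha$ need not exist, since $S(\underline x)$ is not closed.) The sentence ``The heart of the argument is then to verify\ldots'' is exactly where a proof is required, and none is given. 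As it stands, the forward direction is an outline with the key step missing.
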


The natural thing to do is to define the following:


\begin{definition} Let $\mu$ be an uncountable regular cardinal such that $\mu^{<\mu} =\mu^+$ and let $\underline x$ and $S(\underline x)$ be defined as in \autoref{canonical-ds}. Then let $\mathcal{C}(\underline x)$ be the set of closed bounded subsets $p$ of $\mu^+$ such that $p \cap S(\underline x) = \emptyset$. We let $p' \le p$ if and only if $p' \cap (\max p + 1) = p$.\end{definition}

\begin{proposition} Assuming $\mu^{<\mu} \le \mu^+$, $\mathcal{C}(\underline x)$ is $\mu^+$-distributive.\end{proposition}

\begin{proof}[Sketch of Proof] If $S(\underline x)$ is nonstationary, then the result is trivial. If it is stationary, then $S(\underline x)$ does not contain a stationary set of approachable points \cite[Corollary 3.7]{Krueger2007}. Since $\mu^{<\mu}\le \mu^+$ there is going to be stationary set $S^*$ of approachable points, which without loss of generality is disjoint from $S(\underline x)$. Then a standard  distributivity argument applies (see Cox's explanation \cite{Cox2021}).\end{proof}

We will also crucially need a characterization of diamonds. This following appears in joint work with Gilton and Stejskalov{\'a} \cite[Lemma 3.12]{Levine-Gilton-Stejskalova2023}.

\begin{fact}\label{mahlolaverfunction} The following are equivalent:

\begin{enumerate}

\item $\lambda$ is Mahlo and $\diamondsuit_\lambda(\textup{Reg})$ (where of course $\textup{Reg}=\{\tau<\lambda:\tau \text{ regular}\}$) holds.

\item There is a function $\ell : \lambda \to V_\lambda$ such that for every transitive structure $\mathcal{N}$ satisfying a rich fragment of $\ZFC$ that is closed under $\lambda^+$-sequences in $V$,  the following holds: For every $A \in \mathcal{N}$ with $A \in H(\lambda^+)$ and any $a \subset \mathcal{N}$ with $|a|<\lambda$, there is a rich $\mathcal{M} \prec \mathcal{N}$ with $a \cup \{\ell\} \cup\{A\} \subset \mathcal{M}$ such that $\ell(\bar \lambda) = \pi_{\mathcal{M}}(A)$ (where $\bar \lambda = \mathcal{M} \cap \lambda$ and $\pi_{\mathcal{M}}$ is the Mostowski collapse).\footnote{The original is stated with a different quantification---for all such $\mathcal{N}$, there exists a function, not the other way around. However, the proof works with the quantification used here.}

\end{enumerate}\end{fact}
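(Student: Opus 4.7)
The plan is to prove the equivalence separately in each direction. For $(2) \Rightarrow (1)$, given $\ell$ with the guessing property, I would show $\lambda$ is Mahlo by fixing an arbitrary club $C \subseteq \lambda$ and applying the guessing property to $A := C$ inside a suitable transitive $\mathcal{N}$ (say $H(\lambda^+)$) containing $C$ and $\ell$: the resulting rich $\mathcal{M}$ has $\bar\lambda := \mathcal{M} \cap \lambda$ inaccessible, and $C \in \mathcal{M}$ being unbounded below $\bar\lambda$ (by the $<\!\bar\lambda$-closure of $\mathcal{M}$) forces $\bar\lambda \in C$. For $\diamondsuit_\lambda(\Reg)$, I would define $d_\tau := \ell(\tau)$ when $\ell(\tau) \subseteq \tau$ and $d_\tau := \emptyset$ otherwise; given $A \subseteq \lambda$ and a club $C$, the guessing property with $a \supseteq \{A, C\}$ produces a rich $\mathcal{M}$ with $\bar\lambda \in C$, and since $\pi_\mathcal{M}$ fixes ordinals below $\bar\lambda$, $d_{\bar\lambda} = \ell(\bar\lambda) = \pi_\mathcal{M}(A) = A \cap \bar\lambda$, as required.

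For $(1) \Rightarrow (2)$, the plan is to use the diamond together with a natural coding of $V_\lambda$ to build $\ell$. Fix a $\diamondsuit_\lambda(\Reg)$-sequence $\seq{B_\tau}{\tau \in \Reg}$ and a bijection $c : V_\lambda \to \lambda$ with the property that $c \rest V_{\bar\lambda}$ is a bijection onto $\bar\lambda$ for every inaccessible $\bar\lambda < \lambda$; such $c$ is constructed by transfinite recursion using $|V_{\bar\lambda}| = \bar\lambda$. Define $\ell(\tau) := c^{-1}(B_\tau)$ when $\tau$ is inaccessible and $B_\tau$ lies in the range of $c \rest V_\tau$, and $\ell(\tau) := \emptyset$ otherwise, so $\ell(\tau) \in V_\tau \subseteq V_\lambda$. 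To verify the guessing property, fix $\mathcal{N}$, $A \in \mathcal{N} \cap H(\lambda^+)$, and $a \subset \mathcal{N}$ with $|a| < \lambda$. Encode $A$ as a subset $X_A \subseteq \lambda$ via a bijection in $\mathcal{N}$ between $|\tc(\{A\})|$ and $\tc(\{A\})$, realizing the $\in$-relation as a subset of $\lambda$. The diamond guesses $X_A \cap \bar\lambda$ stationarily often in $\Reg$; at each such $\bar\lambda$, construct (by Mahloness and a chain-of-submodels argument) a rich $\mathcal{M}_{\bar\lambda}$ containing $a \cup \{\ell, c, A, X_A\}$ with $\mathcal{M}_{\bar\lambda} \cap \lambda = \bar\lambda$. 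Since $V_{\bar\lambda} \subseteq \mathcal{M}_{\bar\lambda}$ (by $<\!\bar\lambda$-closure and induction on rank), $\pi_{\mathcal{M}_{\bar\lambda}}$ fixes $V_{\bar\lambda}$ pointwise, so $\pi_{\mathcal{M}_{\bar\lambda}}(c)$ agrees with $c$ on $V_{\bar\lambda}$; elementarity then yields $c(\pi_{\mathcal{M}_{\bar\lambda}}(A)) = \pi_{\mathcal{M}_{\bar\lambda}}(c(A)) = X_A \cap \bar\lambda = B_{\bar\lambda}$, and applying $c^{-1}$ gives $\ell(\bar\lambda) = \pi_{\mathcal{M}_{\bar\lambda}}(A)$.

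The main obstacle is ensuring that the coding $c$ interacts canonically with the Mostowski collapses of varying submodels $\mathcal{M}_{\bar\lambda}$: one must guarantee that a single global target $X_A$ suffices for the diamond to match the collapse $\pi_{\mathcal{M}_{\bar\lambda}}(A)$, which a priori depends on $\mathcal{M}_{\bar\lambda}$. This is resolved by the containment $V_{\bar\lambda} \subseteq \mathcal{M}_{\bar\lambda}$, which keeps $\pi_{\mathcal{M}_{\bar\lambda}}$ the identity on $V_{\bar\lambda}$ and thus makes the decoding $c^{-1}(X_A \cap \bar\lambda)$ equal to the intended collapse, and by selecting $\mathcal{M}_{\bar\lambda}$ canonically (for instance as the $<_\mathcal{N}$-least rich submodel containing the specified parameters) so that elementarity relates the diamond guess uniformly to the intended collapse value.
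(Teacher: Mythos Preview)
The paper does not prove this fact; it is quoted from \cite{Levine-Gilton-Stejskalova} (with a footnote adjusting the quantifier order), so there is no argument in the present paper to compare your proposal against.

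On its own merits: your $(2)\Rightarrow(1)$ is correct. For $(1)\Rightarrow(2)$ the overall strategy---decode the diamond sequence to define $\ell$, then build a continuous elementary chain and intersect the diamond's guessing set with the club of heights where the chain yields a rich submodel---is the standard one and is sound. However, your coding is internally inconsistent. You set $\ell(\tau) := c^{-1}(B_\tau)$ for a bijection $c:V_\lambda\to\lambda$, but $B_\tau$ is a \emph{subset} of $\tau$, not an ordinal, so $c^{-1}(B_\tau)$ is ill-typed. In the verification you switch to a second, unrelated coding $X_A\subseteq\lambda$ of $A$ via the $\in$-relation on $\tc(\{A\})$, and then write $\pi_{\mathcal{M}}(c(A))=X_A\cap\bar\lambda$, which equates an ordinal with a subset of $\lambda$; moreover $A\in H(\lambda^+)$ need not lie in $V_\lambda=\dom c$ at all, so $c(A)$ is not even defined in the intended application (where $A$ is a name for a poset of size $\lambda$). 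The clean repair is to drop $c$ entirely and let $\ell(\tau)$ be the Mostowski collapse of the top point of the well-founded extensional relation on $\tau$ coded by $B_\tau$ (and $\emptyset$ if $B_\tau$ codes no such relation); then, once the coding bijection for $A$ is placed in $\mathcal{M}$, elementarity gives that $B_{\bar\lambda}=X_A\cap\bar\lambda$ decodes exactly to $\pi_{\mathcal{M}}(A)$.
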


We can always use such an $\ell$ assuming the consistency of a Mahlo cardinal: If $\lambda$ is Mahlo in a model $V$, then it is Mahlo in G{\"o}del's class $L$ where $\Diamond_\lambda(S)$ holds for all regular $\lambda$ and stationary $S \subset \lambda$.

Two other forcings will be used, mostly for their black-boxed properties:

\begin{definition} If $T$ is an Aronszajn tree of cardinality $\aleph_1$, let $\B(T)$ be Baumgartner's forcing for specializing Aronszajn trees. It consists of finite functions $f:T \to \omega$ such that $f(x)\ne f(y)$ if $x \le_T y$ or $y \le_T x$. If $f,g \in \B(T)$, then $f \le g$ if and only if $f \supseteq g$.\end{definition}

\begin{definition} Let $S \subset [\aleph_2]^\omega$ be stationary. Then let $\P(S)$ be the forcing consisting of continuous, increasing, and countable chains $\seq{M_\xi}{\xi \le \eta}$ of elements of $S$. For $p,q \in \P(S)$, $p \le q$ if and only if $p$ end-extends $q$.\cite{Friedman-Krueger2007}\end{definition}

\begin{fact}\label{some-facts} The following are true for these forcings:
\begin{enumerate}
\item For Aronszajn trees $T$ of cardinality $\aleph_1$, $\B(T)$ has the countable chain condition.
\item For $S \subset [\aleph_2]^\omega$ stationary, $\P(S)$ adds a closed unbounded set in $[\aleph_2^V]^\omega$ through $S$.
\item If $S \in V$, then $\Add(\omega) \ast \dot{\P}(S)$ has the weak $\omega_1$-approximation property, i.e$.$ if $\dot{f}$ is an $\Add(\omega) \ast \dot{\P}(S)$-name for a function $\omega_1 \to \ON$ whose initial segments are in $V$, then $\dot{f}$ is forced to be in $V$.\cite{Krueger2007}
\end{enumerate}
\end{fact}

\subsection{The proof} Now we prove \autoref{secondarytheorem}. Fix $\lambda$ Mahlo. We can assume that $\Diamond_\lambda(\textup{Reg})$ holds, so let $\ell$ witness \autoref{mahlolaverfunction}.

Let $\mathbb{I}=\seq{\mathbb{I}_\alpha,\dot{\mathbb{J}}_\alpha}{\alpha<\lambda}$ be a countable-support iteration of length $\lambda$ such that if $\ell(\delta)$ is an $\mathbb{I}_\delta$-name for a proper forcing then $\Vdash_{\mathbb{I}_\delta} \textup{``} \dot{\mathbb{J}}_\delta = \ell(\delta)\textup{''}$ and otherwise $\dot{\mathbb{J}}_\delta$ is forced to be the trivial forcing.\footnote{See Abraham \cite{Abraham1983} and Cummings-Foreman \cite{Cummings-Foreman1998} for classical examples of forcings that use guessings functions in their definitions.} We will argue momentarily that we have $\Vdash_{\mathbb{I}} \textup{``}\aleph_1^{<\aleph_1} \le \aleph_2 =\lambda\textup{''}$, so we fix an $\I$-name $\underline{\dot{x}}$ of $[\aleph_2]^{<\aleph_1}$ in $V[\I]$ as well as a sequence of names $\seq{\dot{x}_\alpha}{\alpha<\aleph_2}$ that canonically represent the elements listed by $\underline{\dot{x}}$. Then let $\dot{\mathcal{C}}$ be an $\I$-name for $\mathcal{C}(\underline{\dot x})$. Let $G$ be $\I$-generic over $V$ and let $H$ be $\mathcal{C}:=\dot{\mathcal{C}}[G]$-generic over $V[G]$. Then the model in which the theorem is realized is $V[G][H]$.

Most of the desired properties of $V[G][H]$ follow easily. First of all, $V[G][H] \models \lambda = \aleph_2$: For all $\Theta<\lambda$ the forcing $\Col(\aleph_1,\Theta)$ appears in the iteration, $\mathbb{I}$ has the $\lambda$-chain condition because the iterands have size less than $<\lambda$, and $\mathbb{I}$ preserves $\aleph_1$ because it is proper. Then adjoining $H$ preserves $\aleph_2$ by the distributivity property noted above. The fact that $V[G][H] \models \neg \DSS(\mu^+)$ follows from \autoref{canonical-ds} given that the generic object added by $H$ is a club through the complement of the relevant stationary set. The main part of the work is to show that the approachability property fails.

\emph{Note:} If $\mathcal{M} \prec \mathcal{N}$ is rich and $\pi_{\mathcal{M}}$ is the Mostowski collapse relative to $\mathcal{M}$, we will typically denote $\pi_{\mathcal{M}}(a)$ as $\bar a$.


\begin{lemma}$V[G][H] \models \neg \AP(\aleph_2)$.\end{lemma}

\begin{proof} If $\AP(\aleph_2)$ holds then this is forced by some condition $z \in \I \ast \mathcal{C}$. Assuming this is the case, we can derive a contradiction.

\begin{claim}\label{liftinglemma} Let $\mathcal{M} \prec \mathcal{N}$ be a rich model chosen to witness \autoref{mahlolaverfunction} in the sense of having the properties that $\mathcal{M} \cap \lambda = \bar{\lambda}$, $z \in \mathcal{M}$, and $\ell(\bar{\lambda}) $ is an $\mathbb{I} \rest \bar{\lambda}$-name for
\[
\pi_\mathcal{M}(\dot{\mathcal{C}}(\dot{\underline{x}}) \ast \Add(\omega) \ast \P(Y) \ast \B(Y)).
\]
where $Y = ([\lambda]^\omega)^{V[\I \ast \dot{\mathcal{C}}(\dot{\underline{x}})]}$, and moreover $\P(Y)$ is defined with respect to the interpretation of $Y$ as a stationary set and $\B(Y)$ is defined with respect to the interpretation of $Y$ as a tree ordered by end-extension.

Suppose $\bar{G}_0 \ast \bar{H}_0$ is $\bar{\I} \ast \bar{\mathcal{C}}$-generic over $V$. Then there is a $G_0 \ast H_0$ which is $\I \ast \mathcal{C}(\underline{\dot{x}})$-generic over $V$ such that if $j:\bar{\mathcal{M}} \to \mathcal{M} \subset \mathcal{N}$ is the inverse of the Mostowski collapse, then there is a lift $j:\bar{\mathcal M}[\bar{G}_0][\bar{H}_0] \to \mathcal{N}[G_0][H_0]$ with the property that $G_0 \ast H_0$ is an $\aleph_1$-preserving extension over $V[\bar{G}_0][\bar{H}_0][\bar{K}_0][\bar{K}_1][\bar{K}_2]$ where $\bar{K}_0 \ast \bar{K}_1 \ast \bar{K}_2$ is $\Add(\omega) \ast \bar{\P}(Y) \ast \bar{\B}(Y)$-generic.

\end{claim}

\begin{proof}[Proof of Claim] We will lift the elementary embedding $j:\bar{\mathcal{M}} \to \mathcal{N}$ to $j:\bar{\mathcal{M}}[\bar{G}_0][\bar{H}_0] \to \mathcal{N}[G_0][H_0]$. We therefore fix the notation $\bar \lambda = \mathcal{M} \cap \lambda$, and we have an $\bar{\mathbb{M}}$-generic $\bar{G}_0$, so we let $\mathcal{C}=\dot{\mathcal{C}}(\underline{\dot{x}})[G_0]$.

To perform the lift, we need to show that we can absorb the generic $\bar{H}_0$. The first stage is for handling $G_0$. The forcing $\dot{\mathcal{C}}(\dot{\underline{x}}) \ast \Add(\omega) \ast \P(Y) \ast \B(Y)$ is an iteration of proper forcings and is therefore proper, and its image under $\pi_{\mathcal{M}}$ is proper for similar reasons. Hence, since it is also guessed, it is used in the iteration. Therefore $G_0$ takes the form $\bar{G}_0 \ast \bar{H}_0 \ast \bar{K}_0 \ast \bar{K}_1 \ast \bar{K}_2 \ast \bar{K}_3$ where $\bar{K}_3$ is just a remainder. The quotient preserves $\aleph_1$ since the whole forcing does.

To lift the embedding further, we use a master condition argument. Specifically, we want to show that $\cup \bar{H}_0 \cup \{\bar \lambda\}$ is a condition in $\mathcal{C}$. This follows because $\bar \lambda \notin S(\underline{x})$ as evaluated in $\mathcal{N}[G_0]$: Since $\bar{\mathcal{M}}^{<\bar{\lambda}} \subseteq \bar{\mathcal{M}}$ and $\I \rest \bar{\lambda}$ has the $\bar{\lambda}$-chain condition, the evaluation $\seq{x_\beta}{\beta<\bar{\lambda}}$ is equal to the countable subsets of $\bar{\lambda}$ in $\bar{\mathcal{M}}[\bar{G}_0]$. Therefore $P_\mu(\bar \lambda) \setminus \seq{x_\beta}{\beta<\bar \lambda}$ will be nonstationary because of the club added by $\P(Y)$. Hence we choose $H_0$ to be a generic containing $\cup \bar{H}_0 \cup \{\bar \lambda\}$.\end{proof}


Suppose then that $z \in \M \ast \dot{\mathcal{C}}(\underline x)$ forces that approachability holds. By the claim, there is an embedding $\bar{\mathcal{M}}[\bar{G}][\bar{H}] \to \mathcal{N}[G][H]$ such that $V[G \ast H]$ is an extension over $V[\bar{G}][\bar{H}][\bar{K}_0][\bar{K}_1][\bar{K}_2]$ that preserves $\aleph_1$ where $\bar{K}_0 \ast \bar{K}_1 \ast \bar{K}_2$ is generic for $\pi_{\mathcal{M}}(\Add(\omega) \ast \P(Y) \ast \B(Y))$. Since we are supposing that approachability holds, there is in $\mathcal{N}[G][H]$ a club $C \subseteq \aleph_2$ such that all of its points of cofinality $\aleph_1$ are approachable. By elementarity it follows that $\bar{\lambda} \in C$, so it is enough to show that $\bar{\lambda}$ cannot actually be an approachable point.

We need to show that $\bar{Y}$ does not have a cofinal branch. The set $\bar{Y}$ is already in $V[\bar{G}][\bar{H}]$, and it is an Aronszajn tree in this model because $V[\bar{G}][\bar{H}] \models \textup{``}\bar{\lambda} = \aleph_2\textup{''}$ and any branch through $\bar{Y}$ of length $\aleph_1$ would be a collapse of $\bar{\lambda}$. By the weak $\omega_1$-approximation property of $\Add(\omega) \ast \dot{\P}(S)$ (\autoref{some-facts}), $\bar{Y}$ is still an Aronszajn tree in $V[\bar{G}][\bar{H}][\bar{K}_0][\bar{K}_1]$ because no new cofinal branches are added. Moreover it has cardinality $\aleph_1$ in that model. (Note that $\bar{Y}$ may have uncountable levels.) The forcing $\bar{K}_2$ adds a specializing function, therefore it remains an Aronszajn tree in any $\aleph_1$-preserving extension, so in particular this is true for $V[G][H]$.\end{proof}

\begin{remark} This master condition argument can also be used to show that $\mathcal{C}(\underline x)$ is distributive over $V[\I]$.\end{remark}

Now we are finished with the proof of \autoref{secondarytheorem}.

\subsection*{Acknowledgement} Many thanks to Hannes Jakob for finding errors in the original version of this paper, and to the anonymous referees for their further corrections and improvements.

\section{Further directions}

We propose some other considerations along the lines of the question: Why did we have to do more work to get \autoref{superconsecutivetheorem} after obtaining \autoref{consecutivetheorem}? Or rather, is the assumption $2^\mu = \mu^+$ necessary for \autoref{dss-equivalence}?

\begin{question} Is it consistent for $\mu$ regular that exactly one of $\DSS(\mu^+)$ and ``internally club and internally unbounded are distinct for $[H(\mu^+)]^\mu$'' holds?\footnote{This question was answered by Jakob after the previous version of this paper was released \cite{Jakob2024}.}\end{question}


On a similar note, the assumption that $2^\mu=|H(\mu^+)|$ is also used in a folklore result that assuming $2^\mu=\mu^+$, the distinction between internally unbounded and internally approachable for $[\mu^+]^\mu$ requires a Mahlo cardinal.

\begin{question} What is the exact equiconsistency strength of the separation of internally approachable and internally unbounded for $[H(\mu^+)]^\mu$ for regular $\mu$?\end{question}

\bibliography{bibliography}
\bibliographystyle{plain}

\end{document}